\newtheorem{theorem}{Theorem}
\newtheorem{proposition}[theorem]{Proposition}
\newtheorem{lemma}[theorem]{Lemma}
\newtheorem{defi}[theorem]{Definition}
\theoremstyle{definition}
\newtheorem{example}[theorem]{Example}
\newtheorem{remark}[theorem]{Remark}
\newcommand{\E}{\mathbb{E}}
\newcommand{\Q}{\mathbb{Q}}
\newcommand{\R}{\mathbb{R}}
\newcommand{\Pb}{\mathbb{P}}
\newcommand{\F}{\mathcal{F}}
\renewcommand{\S}{\mathcal{S}}
\newcommand{\W}{\mathcal{W}}
\renewcommand{\L}{\mathcal{L}}
\newcommand{\equi}{\mathop{\sim}\limits}
\def\={{\;\mathop{=}\limits^{\text{(law)}}\;}}
\begin{document}

\begin{center}
{\LARGE \textbf{Penalizing null recurrent diffusions}}\\
\vspace{.7cm}
{\large Christophe \textsc{Profeta}\footnote{Laboratoire d'Analyse et Probabilit\'es, Universit\'e d'\'Evry - Val d'Essonne, B\^atiment I.B.G.B.I., 3\`eme \'etage, 23 Bd. de France, 91037 EVRY CEDEX.\\
 \textbf{E-mail}: christophe.profeta@univ-evry.fr}
}\\
\vspace{.8cm}
\end{center}

\textbf{Abstract:	} 
We present some limit theorems for the normalized laws (with respect to functionals involving last passage times at a given level $a$ up to time $t$) of a large class of null recurrent diffusions.  Our results rely on hypotheses on the L\'evy measure of the diffusion inverse local time at 0. As a special case, we recover some of the penalization results obtained by Najnudel, Roynette and Yor in the (reflected) Brownian setting. \\

\textbf{Keywords:} Penalization, null recurrent diffusions, last passage times, inverse local time.

%\tableofcontents
\section{Introduction}
\subsection{A few notation}
We consider a linear regular null recurrent diffusion $(X_t, t\geq0)$  taking values in $\R^+$, with 0 an instantaneously reflecting boundary and $+\infty$ a natural boundary. Let $\Pb_x$ and $\E_x$ denote, respectively, the probability measure and the expectation associated with $X$ when started from $x\geq0$. We assume that $X$ is defined on the canonical space $\Omega:=\mathcal{C}(\R_+\rightarrow\R_+)$ and we denote by $(\F_t, t\geq0)$ its natural filtration, with  $\F_\infty:= \bigvee\limits_{t\geq0}\F_t$.\\
We denote by $s$ its scale function, with the normalization $s(0)=0$, and by $m(dx)$ its speed measure, which is assumed to have no atoms. 
It is known that $(X_t, t\geq0)$ admits a transition density $q(t,x,y)$ with respect to $m$, which is 
 jointly continuous and symmetric in $x$ and $y$, that is: $q(t,x,y)=q(t,y,x)$. This allows us to define, for $\lambda>0$, the resolvent kernel of $X$ by:
 \begin{equation}
u_\lambda(x,y)=\int_0^\infty e^{-\lambda t}q(t,x,y) dt. 
\end{equation}
 We also introduce $(L_t^a, t\geq0)$ the local time of $X$ at $a$, with the normalization:
$$
L_t^a:= \lim_{\varepsilon \downarrow 0} \frac{1}{m([a, a+\varepsilon[)} \int_0^t 1_{[a,a+\varepsilon[}(X_s)ds
$$
and $(\tau_l^{(a)}, l\geq0)$ the right-continuous inverse of $(L^a_t, t\geq0)$:
$$\tau_l^{(a)}:=\inf\{t\geq0; L_t^a>l\}.$$
As is well-known, $(\tau_l^{(a)}, l\geq0)$ is a subordinator, and we denote by $\nu^{(a)}$ its L\'evy measure.\\
 To simplify the notation, we shall write in the sequel $\tau_l$ for $\tau_l^{(0)}$ and $\nu$ for $\nu^{(0)}$. We shall also denote sometimes by $\overline{\mu}(t)=\mu([t,+\infty[)$ the tail of the measure $\mu$.

\subsection{Motivations}

Our aim in this paper is to establish some penalization results involving null recurrent diffusions.
Let us start by giving a definition of penalization: 
\begin{defi}
Let $(\Gamma_t,t\geq0)$ be a measurable process taking positive values, and such that $0<\E_x[\Gamma_t]<\infty$ for any $t>0$ and every $x\geq 0$. We say that the process $(\Gamma_t, t\geq0)$ satisfies the penalization principle if there exists a probability measure $\Q^{(\Gamma)}_x$ defined on $(\Omega, \F_\infty)$ such that:
$$\forall s\geq0, \;\forall \Lambda_s\in\F_s,\qquad \lim\limits_{t\rightarrow+\infty}\frac{\E_x[1_{\Lambda_s} \Gamma_t]}{\E_x[\Gamma_t]}=\Q^{(\Gamma)}_x(\Lambda_s).$$
\end{defi}
This problem has been widely studied by Roynette, Vallois and Yor when $\Pb_x$ is the Wiener measure or the law of a Bessel process (see \cite{RVY} for a synthesis and further references). They showed in particular that Brownian motion may be penalized by a great number of functionals involving local times, supremums, additive functionals,  numbers of downcrossings on an interval...  
Most of these results were then unified by Najnudel, Roynette and Yor (see \cite{NRY}) in a general penalization theorem, whose proof relies on the construction of a remarkable measure $\W$. \\

Later on, Salminen and Vallois managed in \cite{SV} to extend the class of diffusions for which penalization results hold. They proved in particular that under the assumption that the (restriction of the) L\'{e}vy measure $\frac{1}{\nu([1,+\infty[)}\nu_{|[1,+\infty[}$ of the subordinator $(\tau_l, l\geq0)$ is subexponential, the penalization principle holds for the functional $(\Gamma_t=h(L_t^0), t\geq0)$ with $h$ a  non-negative and non-increasing function with compact support. \\
Let us recall that a probability measure $\mu$ is said to be subexponential ($\mu$ belongs to class $\S$) if, for every $t\geq0$, 
$$\lim_{t\rightarrow +\infty}\frac{\mu^{\ast2}([t,+\infty[)}{\mu([t, +\infty[)} =2,$$
where $\mu^{\ast2}$ denotes the convolution of $\mu$ with itself.
The main examples of subexponential distributions are given by measures having a regularly varying tail (see Chistyakov \cite{Chi} or Embrechts, Goldie and Veraverbek \cite{EGV}): 
$$\mu([t,+\infty[) \equi_{t\rightarrow+\infty} \frac{\eta(t)}{t^\beta}$$
where $\beta\geq0$ and $\eta$ is a slowly varying function. When $\beta\in]0,1[$, we shall say that such a measure belongs to class $\mathcal{R}$. Let us also remark that a subexponential measure always satisfies the following property:
$$\forall x\in \R,\quad \lim_{t\rightarrow +\infty} \frac{\mu([t+x,+\infty[)}{\mu([t,+\infty[)}=1.$$
The set of such measures shall be denoted by $\L$, hence:
$$\mathcal{R} \subset \S \subset \L.$$
Now, following Salminen and Vallois, one may reasonably wonder what kind of penalization results may be obtained for diffusions whose normalized L\'evy measure belongs to classes $\mathcal{R}$ or $\L$. This is the main purpose of this paper, i.e. we shall prove that the results of Najnudel, Roynette and Yor remain true for diffusions whose normalized  L\'evy measure belongs to $\mathcal{R}$, and we shall give an ``integrated version'' when it belongs to $\L$\footnote{In the remainder of the paper, we shall make a  slight abuse of the notation and say that the measure $\nu$ belongs to $\L$ or $\mathcal{R}$ instead of $\frac{1}{\nu([1,+\infty[)}\nu_{|[1,+\infty[}$ belongs to $\L$ or $\mathcal{R}$. This is of no importance since  the fact that a probability measure belongs to classes $\L$ or $\mathcal{R}$ only involves the behavior of its tail at $+\infty$.}.

\subsection{Statement of the main results}

Let $a\geq0$, $g_a^{(t)}:=\sup\{u\leq t; X_u=a\}$ and $(F_t, t\geq0)$ be a positive and predictable process such that
 $$\displaystyle0<\E_x\left[\int_0^{+\infty} F_u dL_u^a\right]<\infty.$$

\noindent
\begin{theorem}\label{theo:equi}  \ \\ \vspace*{-.4cm}
\begin{enumerate}
\item If $\nu$ belongs to class $\mathcal{L}$, then
$$\forall a\geq0, \qquad \int_0^t\nu^{(a)}([s,+\infty[)ds \equi_{t\rightarrow+\infty}\int_0^t\nu([s,+\infty[)ds$$
and
$$\E_x\left[ \int_0^t F_{g_a^{(s)}}ds\right]\;\equi_{t\rightarrow+\infty}\; \left( \E_x[F_0](s(x)-s(a))^+ +
\E_x\left[\int_0^{+\infty} F_u dL_u^a\right]\right)\int_0^t\nu([s,+\infty[)ds .$$
\item If $\nu$ belongs to class $\mathcal{R}$:
$$\forall a\geq0, \qquad \nu^{(a)}([t,+\infty[) \equi_{t\rightarrow+\infty}\nu([t,+\infty[)$$
 and if $F$ is decreasing: 
$$\E_x\left[F_{g_a^{(t)}}\right]  \;\equi_{t\rightarrow+\infty}\;\left( \E_x[F_0](s(x)-s(a))^+ +
\E_x\left[\int_0^{+\infty} F_u dL_u^a\right]\right)\nu([t,+\infty[)$$
\end{enumerate}
\end{theorem}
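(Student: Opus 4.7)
The plan is to organize the proof around three pillars: (i) a Laplace-transform comparison for the two L\'evy measures, (ii) a master formula for $\E_x\bigl[F_{g_a^{(t)}}\bigr]$ coming from the compensation formula in It\^o's excursion theory at level $a$, and (iii) a separate hitting-time tail estimate. Part 1 will then follow from the same machinery by integrating everything in $t$: each occurrence of $\overline\nu(t)$ gets replaced by $V(t):=\int_0^t\overline\nu(s)\,ds$, which is why Part 1 needs only $\nu\in\mathcal{L}$ while Part 2 genuinely requires the stronger $\mathcal{R}$.

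For the L\'evy-measure equivalence, I would use the identity $\Phi^{(a)}(\lambda):=\int_0^\infty(1-e^{-\lambda s})\nu^{(a)}(ds)=1/u_\lambda(a,a)$. The strong Markov property applied at $T_a$ and $T_0$, combined with the resolvent symmetry $u_\lambda(0,a)=u_\lambda(a,0)$, yields
$$\frac{\Phi^{(a)}(\lambda)}{\Phi(\lambda)} = \frac{u_\lambda(0,0)}{u_\lambda(a,a)} = \frac{\E_0[e^{-\lambda T_a}]}{\E_a[e^{-\lambda T_0}]},$$
which tends to $1$ as $\lambda\downarrow 0$ by recurrence of $X$. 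Karamata's Tauberian theorem applied to $\Phi(\cdot)/\lambda$ (under class $\mathcal{R}$) or to $\Phi(\cdot)/\lambda^2$ (under class $\mathcal{L}$, where $V$ is regularly varying modulo a slowly varying factor since $\overline\nu$ is long-tailed and $V(\infty)=\infty$) then delivers the tail (resp.\ integrated-tail) equivalence.

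Next, applying the compensation formula to the Poisson point process of excursions from $a$ (intensity $dl\otimes n^{(a)}$ under $\Pb_a$), with the predictable functional isolating the excursion straddling $t$---whose left endpoint is $g_a^{(t)}=\tau^{(a)}_{L_t^a-}$ and whose length exceeds $t-g_a^{(t)}$---and re-indexing $dl$ as $dL_u^a$, I obtain
$$\E_x\bigl[F_{g_a^{(t)}}\bigr] = \E_x[F_0]\,\Pb_x(T_a>t) + \E_x\left[\int_0^t F_u\,\overline{\nu^{(a)}}(t-u)\,dL_u^a\right],$$
the boundary term coming from $\{T_a>t\}$ on which $g_a^{(t)}=0$, and the integral term coming from a strong Markov step at $T_a$ (noting $L^a_u=0$ on $[0,T_a]$). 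The long-tail property $\overline{\nu^{(a)}}(t-u)/\overline{\nu^{(a)}}(t)\to 1$ uniformly on compacts, combined with the hypothesis $\E_x[\int_0^\infty F_u\,dL_u^a]<\infty$, turns the integral term (by dominated convergence and the first step) into $\overline\nu(t)\,\E_x\bigl[\int_0^\infty F_u\,dL_u^a\bigr](1+o(1))$.

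Finally, I would establish $\Pb_x(T_a>t)\sim(s(x)-s(a))^+\overline\nu(t)$: for $x\leq a$ the process reaches $a$ within the first excursion from $0$ that hits $a$, and such excursions conditioned on reaching $a$ have tails $o(\overline\nu(t))$; for $x>a$ a scale-function change of variable turns $T_a$ into the hitting time of $0$ by a time-changed Brownian motion starting from $s(x)-s(a)$, and the coefficient $s(x)-s(a)$ emerges from the classical $h/\sqrt{t}$ asymptotic (equivalently from the expansion of $\phi_\lambda(x)/\phi_\lambda(a)$ as $\lambda\downarrow 0$). Part 1 is then recovered by integrating in $t$: $\E_x[T_a\wedge t]=\int_0^t\Pb_x(T_a>s)\,ds\sim(s(x)-s(a))^+V(t)$ and $V^{(a)}(t-u)\sim V(t)$. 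The main obstacle will be this hitting-time tail for $x>a$---isolating the coefficient $s(x)-s(a)$, which encodes the scale-function-weighted entrance law of upward excursions from $a$, requires some care; a secondary subtle point is verifying that Karamata's theorem does apply in class $\mathcal{L}$, where the regular variation lives in the integrated tail $V$ rather than in $\overline\nu$ itself.
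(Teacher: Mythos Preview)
Your architecture matches the paper's: both hinge on the decomposition
\[
\E_x\bigl[F_{g_a^{(t)}}\bigr] \;=\; \E_x[F_0]\,\Pb_x(T_a>t)\;+\;\int_0^t \Pb^{x,u,a}(F_u)\,q(u,x,a)\,\overline{\nu^{(a)}}(t-u)\,du
\]
(your compensation formula is exactly the paper's last-exit decomposition) together with $u_\lambda(a,a)/u_\lambda(0,0)\to 1$. But two steps do not go through as written.

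\textbf{Part~1 cannot be handled by Karamata.} You claim that under class~$\L$ the integrated tail $V(t)=\int_0^t\overline\nu(s)\,ds$ is regularly varying, so that Karamata applied to $\Phi(\cdot)/\lambda^2$ yields $V^{(a)}\sim V$. This implication is false: long-tailedness of $\overline\nu$ gives only the \emph{additive} shift property $V(t-u)\sim V(t)$, not the \emph{multiplicative} one $V(ct)/V(t)\to c^\rho$ that Tauberian theorems require; in class~$\L$ the function $V$ need not be regularly varying at all. The paper proves Part~1 with no Tauberian step. It uses an elementary integrated-convolution lemma (if $\int_0^\infty|f|<\infty$ and $V$ is long-tailed in the additive sense, then $\int_0^t f\ast\overline\mu(s)\,ds\sim(\int_0^\infty f)\,V(t)$, proved by splitting at a fixed $A$ and applying the second mean-value theorem on $[A,t]$), together with the identity $\int_0^\infty(q(u,a,a)-q(u,a,x))\,du=s(x)-s(a)$ to produce the constant $(s(x)-s(a))^+$. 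The integrated-tail equivalence $V^{(a)}\sim V$ is obtained directly by writing $V^{(a)}=\overline\nu\ast f_a$ for a function $f_a(t)\to 1$.

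\textbf{Part~2: dominated convergence is not sufficient.} To pass to the limit in $\int_0^t\overline{\nu^{(a)}}(t-u)\,\mu(du)\big/\overline\nu(t)$ with $\mu(du)=\E_x[F_u\,dL_u^a]$ you invoke DCT, but the ratio $\overline{\nu^{(a)}}(t-u)/\overline\nu(t)$ is unbounded as $u\uparrow t$ (indeed $\overline{\nu^{(a)}}(0+)$ may be infinite), so there is no dominating function. The convolution statement you need is true for $\nu\in\mathcal R\subset\S$, but its proof requires Potter bounds and a careful treatment of the region $u\in[t/2,t]$---it is genuinely a subexponential-type result, not a DCT consequence. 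The paper sidesteps this: it takes the Laplace transform of $t\mapsto\E_x[F_{g_a^{(t)}}]$, which turns the convolution into a \emph{product} and gives the small-$\lambda$ asymptotic immediately; the Tauberian theorem is then applied to the whole function $t\mapsto\E_x[F_{g_a^{(t)}}]$, and this is precisely where ``$F$ decreasing'' is used, to guarantee monotonicity. Your route never invokes that hypothesis, which should be a warning sign. Similarly, the ``$h/\sqrt t$'' heuristic for $\Pb_x(T_a>t)$ ignores the time change by the speed measure; the paper instead shows $u_\lambda(a,a)-u_\lambda(a,x)\to s(x)-s(a)$ from the Krein representation of $u_\lambda$ and applies the Tauberian theorem to the monotone tail $\Pb_x(T_a>t)$---your parenthetical ``expansion of $\phi_\lambda(x)/\phi_\lambda(a)$'' is the right idea, and is what actually works.
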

\noindent
%In other words, when $\nu$ belongs to $\mathcal{R}$, the first relation may be differentiated.\\
\begin{remark}
Point 2. does not hold for every $\nu\in \L$. Indeed, otherwise, taking $a=0$ and $F_t=1_{\{L_t^0\leq \ell\}}$ with $\ell>0$, one would obtain:
$$\Pb_0(L_t^0\leq \ell)=\Pb_0(\tau_\ell>t)  \;\equi_{t\rightarrow+\infty}\; \ell \nu([t,+\infty[),$$
a relation which is known to hold if and only if $\nu\in \S$, see \cite{EGV} or \cite[p.164]{Sat}.
\end{remark}

\begin{remark}
If $(X_t, t\geq0)$ is a positively recurrent diffusion, then $\int_0^{+\infty} \nu([s,+\infty[)ds =m(\R^+)$ and the limit in Point 1. equals:
$$\lim_{t\rightarrow +\infty} \E_x\left[ \int_0^t F_{g_a^{(s)}}ds\right]= \E_x\left[ \int_0^{+\infty} F_{g_a^{(s)}}ds\right] =  \E_x[F_0] \E_x\left[T_a\right] + \E_x\left[\int_0^{+\infty} F_u dL_u^a\right]m(\R^+).$$
\end{remark}

\noindent
In the following penalization result, we shall choose the weighting functional $\Gamma$ according to $\nu$:
\begin{theorem}\label{theo:penal}
Assume that: 

\begin{enumerate}[$a)$]
\item either $\nu$ belongs to class $\mathcal{L}$, and $\displaystyle \Gamma_t= \int_0^t F_{g_a^{(s)}}ds$,
\item or $\nu$ belongs to class $\mathcal{R}$ and $\displaystyle \Gamma_t= F_{g_a^{(t)}}$ with $F$ decreasing.
\end{enumerate}
Then, the penalization principle is satisfied by the functional $(\Gamma_t, t\geq0)$, i.e.  there exists a probability measure $\Q_x^{(F)}$  on $(\Omega, \F_\infty)$, which is the same in both cases, such that, 
$$\forall s\geq0, \;\forall \Lambda_s\in\F_s,\quad \lim\limits_{t\rightarrow+\infty}\frac{\E_x\left[1_{\Lambda_s}\Gamma_t\right]}{\E_x\left[\Gamma_t\right]}=\Q_x^{(F)}(\Lambda_s).$$
Furthermore:
\begin{enumerate}
\item The  measure $\Q_x^{(F)}$ is weakly absolutely continuous with respect to $\Pb_x$:
$$\Q_{x|\F_t}^{(F)}= \frac{M_t(F_{g_a})}{\E_x[F_0](s(x)-s(a))^++\E_x\left[\int_0^{+\infty} F_u dL_u^a\right]}\cdot \Pb_{x|\F_t}$$
where the martingale  $(M_t(F_{g_a}), t\geq0)$ is given by:
$$M_t(F_{g_a})=F_{g_a^{(t)}}(s(X_t)-s(a))^+ + \E_x\left[\int_t^{+\infty} F_u dL_u^a|\F_t \right].$$
\item Define $g_a:=\sup\{s\geq0, \; X_s=a\}$. Then, under $\Q_x^{(F)}$:
\begin{enumerate}[i)]
\item $g_a$ is finite a.s., 
\item conditionally to $g_a$, the processes $(X_t, t\leq g_a)$ and $(X_{g_a+t},t\geq0)$ are independent,   
\item the process $(X_{g_a+u}, u\geq0)$ is transient, goes towards $+\infty$ and its law does not depend on the functional $F$.
\end{enumerate}
\end{enumerate}
\end{theorem}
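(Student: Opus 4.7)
The plan proceeds in three steps: first, identify $(M_t(F_{g_a}))$ as a positive $(\Pb_x,\F_t)$-martingale whose initial value equals the normalizing constant appearing in Theorem~\ref{theo:equi}; second, establish the penalization limit by combining the Markov property at time $s$ with that theorem; third, analyze the canonical process under $\Q_x^{(F)}$. For the martingale property, rewrite $M_t = F_{g_a^{(t)}}(s(X_t)-s(a))^+ - \int_0^t F_u\, dL_u^a + N_t$, where $N_t := \E_x[\int_0^{\infty}F_u\,dL_u^a\,|\,\F_t]$ is a closed martingale by the standing hypothesis on $F$. Tanaka's formula with the paper's normalization of $L^a$ yields $(s(X_t)-s(a))^+ = (s(x)-s(a))^+ + \int_0^t 1_{\{X_u>a\}}\,ds(X_u) + L_t^a$. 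Since $u\mapsto F_{g_a^{(u)}}$ has finite variation, is constant off the zero-Lebesgue-measure set $\{X_u=a\}$, and agrees with $F_u$ on the support of $dL^a$, an It\^o--Stieltjes integration by parts (the cross-variation term and the ``jump'' term $(s(X_u)-s(a))^+\,dF_{g_a^{(u)}}$ both vanishing) shows that $F_{g_a^{(t)}}(s(X_t)-s(a))^+ - \int_0^t F_u\,dL_u^a - F_0(s(x)-s(a))^+$ is a local martingale. Hence $M_t$ is a positive local martingale with $\E_x[M_t]=M_0<\infty$, and therefore a true martingale.

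For the penalization limit in case (a), decompose $\Gamma_t = \Gamma_s + \int_s^t F_{g_a^{(u)}}\,du$ and apply the Markov property at $s$. Splitting the integral at $\tilde T_a := \inf\{r\geq 0 : X_{s+r}=a\}$, one has $g_a^{(s+r)}=g_a^{(s)}$ for $r<\tilde T_a$ and $g_a^{(s+r)}=s+\tilde g_a^{(r)}$ for $r\geq \tilde T_a$; combining the strong Markov property at $s+\tilde T_a$ with Theorem~\ref{theo:equi} applied at the point $a$ (legitimate since $\nu^{(a)}\sim\nu$ at infinity by part~1 of the same theorem) produces the pointwise asymptotic
\[
\E_x\!\left[\int_s^t F_{g_a^{(u)}}\,du \,\Big|\, \F_s\right] \;\sim\; \alpha(t)\left(F_{g_a^{(s)}}(s(X_s)-s(a))^+ + \E_x\!\left[\int_s^\infty F_u\, dL_u^a \,\Big|\, \F_s\right]\right) \;=\; \alpha(t)\,M_s,
\]
where $\alpha(t):=\int_0^t \overline{\nu}(r)\,dr$ in case (a) and $\alpha(t):=\overline{\nu}(t)$ in case (b). The two analytic inputs are the hitting-time tail $\Pb_y(T_a>t)\sim(s(y)-s(a))^+\,\overline{\nu}(t)$, which integrates to $\E_y[T_a\wedge t]\sim(s(y)-s(a))^+\alpha(t)$, and the $\mathcal{L}$-slow-variation identity $\alpha(t-s)\sim\alpha(t)$. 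Integrating against $1_{\Lambda_s}$, dividing by $\E_x[\Gamma_t]\sim\alpha(t)M_0$, and observing that $\E_x[1_{\Lambda_s}\Gamma_s]$ stays bounded in $t$ gives the limit $\E_x[1_{\Lambda_s}M_s]/M_0$; case (b) is strictly analogous, using part~2 of Theorem~\ref{theo:equi} together with the decrease of $F$. Taking $\Lambda_s=\Lambda_t\in\F_t$ in the resulting formula yields the density $M_t/M_0$ on $\F_t$.

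To show $g_a<\infty$ $\Q_x^{(F)}$-almost surely, apply optional stopping to the positive martingale $M$ at $d_a^{(n)} := \inf\{t\geq n : X_t=a\}$; since $X_{d_a^{(n)}}=a$ the first term of $M_{d_a^{(n)}}$ vanishes, so $\Q_x^{(F)}(d_a^{(n)}<\infty) = \E_x[\int_{d_a^{(n)}}^{\infty}F_u\,dL_u^a]/M_0$, which tends to $0$ by dominated convergence as $n\to\infty$. On $\{t>g_a\}$ the integral $\int_t^\infty F_u\,dL_u^a$ vanishes, so the density factors as $(F_{g_a}/M_0)\cdot h(X_t)$ with $h(y):=(s(y)-s(a))^+$ harmonic for $X$ killed at $a$: the $\F_{g_a}$-measurable factor fixes the pre-$g_a$ law, while the second factor exhibits the post-$g_a$ process as the Doob $h$-transform of $X$ killed at $a$, started from $a$ --- a transient diffusion drifting to $+\infty$ whose law manifestly does not involve $F$; the multiplicative factorization of the density simultaneously produces the stated conditional independence. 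The main obstacle lies in the middle step: passing from the pointwise-in-$\omega$ asymptotic produced by Theorem~\ref{theo:equi} to the $\E_x[1_{\Lambda_s}\,\cdot\,]$-averaged statement requires a uniform-integrability bound controlling the ratio $\E_x[\int_s^t F_{g_a^{(u)}}\,du\,|\,\F_s]/\alpha(t)$ along $t\to\infty$ --- typically achieved by constructing a dominating integrable envelope using the finiteness hypothesis on $\E_x[\int_0^\infty F_u\,dL_u^a]$.
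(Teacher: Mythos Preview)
Your strategy---Markov property at time $s$ combined with Theorem~\ref{theo:equi} for the shifted process---is the same as the paper's, but two linked gaps remain. First, in your martingale step you write ``Hence $M_t$ is a positive local martingale with $\E_x[M_t]=M_0<\infty$, and therefore a true martingale,'' but the equality $\E_x[M_t]=M_0$ is precisely what needs to be shown: balayage only yields that $M_t-M_0$ is a \emph{local} martingale, and positivity then gives $\E_x[M_t]\le M_0$, nothing more. Second, you yourself flag the uniform-integrability obstacle in passing the pointwise asymptotic under $\E_x[1_{\Lambda_s}\,\cdot\,]$, but leave it open.

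The paper dispatches both issues simultaneously, and by a different mechanism than you propose. It does not prove the martingale property first; instead it invokes a meta-theorem (Theorem~\ref{theo:meta}, resting on Scheff\'e's lemma): once the a.s.\ convergence $\E_x[\Gamma_t\,|\,\F_s]/\E_x[\Gamma_t]\to M_s/M_0$ is in hand, it suffices to verify $\E_x[M_s]=M_0$ for every fixed $s$, and then $L^1$-convergence---hence the penalization principle and the martingale property---follow automatically, with no dominating envelope required. The key identity $\E_x[M_s]=M_0$ is established not by stochastic calculus but by a direct computation of $\E_x\bigl[F_{g_a^{(s)}}(s(X_s)-s(a))^+\bigr]$ using the explicit joint law of $(g_a^{(s)},X_s)$ from Proposition~\ref{prop:ga}; your balayage identity appears in the paper only as an a posteriori remark giving an alternative expression for $M_t$. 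For Point~2, your optional-stopping route to $g_a<\infty$ is correct once $M$ is known to be a true martingale, but the paper instead proves an explicit integral representation (Theorem~\ref{theo:IQ}),
\[
\Q_x^{(F)}=\frac{1}{M_0}\left(\int_0^{+\infty}q(u,x,a)\,F_u\,\Pb^{x,u,a}\circ\Pb_a^{\uparrow a}\,du+(s(x)-s(a))^+F_0\,\Pb_x^{\uparrow a}\right),
\]
from which the finiteness of $g_a$, the conditional independence of the pre- and post-$g_a$ pieces, and the identification of the post-$g_a$ law as $\Pb^{\uparrow a}$ can all be read off at once.
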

\noindent
We shall give in Theorem \ref{theo:IQ} a precise description of $\Q_x^{(F)}$ through an integral representation.

\begin{remark}
The main example of diffusion satisfying Theorems \ref{theo:equi} and \ref{theo:penal} is of course the Bessel process with dimension $\delta\in]0,2[$ reflected at 0.  Indeed, setting $\beta=1-\frac{\delta}{2}\in]0,1[$, the tail of its L\'evy measure at 0 equals:
$$\nu([t,+\infty[)= \frac{2^{1-\beta}}{\Gamma\left(\beta\right)}  \frac{1}{t^{\beta}} $$
i.e. $\nu\in\mathcal{R}$.
\end{remark}

\begin{remark}
Let us also mention that this kind of results no longer holds for positively recurrent diffusions. Indeed, it is shown in \cite{Pro} that if $(X_t, t\geq0)$ is a recurrent diffusion reflected on an interval, then, under mild assumptions, the penalization principle is satisfied by the functional  $(\Gamma_t=e^{-\alpha L_t^0}, t\geq0)$ with $\alpha\in \R$, but unlike in Theorem \ref{theo:penal}, the penalized process so obtained remains a positively recurrent diffusion.
\end{remark}

\begin{example}
Assume that $\nu\in \mathcal{R}$ and let $h$ be a positive and decreasing function with compact support on $\R^+$.
\begin{enumerate}[$\bullet$]
\item Let us take $(F_t, t\geq0)=(h(L_t^a), t\geq0)$.\\
Then $\displaystyle\E_0\left[\int_0^{+\infty} h(L_s^a) dL_s^a\right]=\int_0^{+\infty}h(\ell)d\ell<\infty$ and, since $L_{g_a^{(t)}}^a=L_t^a$,
$$\E_0\left[h(L_t^a)\right]\;\equi_{t\rightarrow+\infty}\;\nu([t,+\infty[) \int_0^{+\infty} h(\ell)d\ell,$$
and the martingale $(M_t(L_{g_a}^a), t\geq0)$ is an Az\'ema-Yor type martingale:
$$M_t(L_{g_a}^a)=h(L_t^a)(s(X_t)-s(a))^+ +\int_{L_t^a}^{+\infty} h(\ell)d\ell.$$

\item Let us take $(F_t, t\geq0) = (h(t), t\geq0)$.\\
Then $\displaystyle\E_0\left[\int_0^{+\infty} h(u) dL_u^a\right]=\int_0^{+\infty}h(u)\E_0[dL_u^a]=\int_0^{+\infty} h(u)q(u,0,a)du<\infty$ and therefore:
$$\E_0\left[h(g_a^{(t)})\right]\equi_{t\rightarrow+\infty}\nu([t,+\infty[) \int_0^{+\infty} h(u)q(u,0,a)du,$$
and the martingale $(M_t(g_a), t\geq0)$ is given by:
$$M_t(g_a)=h(g_a^{(t)})(s(X_t)-s(a))^+ +\int_0^{+\infty} h(v+t)q(v,X_t,a)dv.$$

\item One may also take for instance $(F_t, t\geq0)=(h(S_t), t\geq0)$ where $S_t:=\sup\limits_{s\leq t}X_s$ or $(F_t, t\geq0)=h\left(\int_0^t f(X_s) ds\right)$ where $f:\R^+\longrightarrow \R^+$ is a Borel function. These were the first kind of weights studied by Roynette, Vallois and Yor, see \cite{RVY2} and \cite{RVY3}.
\end{enumerate}

\end{example}

\subsection{Organization}

The remainder of the paper is organized as follows:

\begin{enumerate}[$\bullet$]
\item In Section 2, we introduce some notation and recall a few known results that we shall use in the sequel. They are mainly taken from \cite{Sal} and \cite{SVY}.
\item Section 3 is devoted to the proof of Theorem \ref{theo:equi}. The two Points 1. and 2. are dealt with separately: when $\nu\in \mathcal{R}$, the asymptotic is obtained via a Laplace transform and a Tauberien theorem, while in the case $\nu\in \mathcal{L}$,  we shall use a basic result on integrated convolution products.
\item  Section 4 gives the proof of  Point 1. of Theorem \ref{theo:penal}, which essentially relies on a meta-theorem, see \cite{RVY}.
\item In Section 5, we derive a integral representation for the penalized measure $\Q_x^{(F)}$ which implies Point 2. of Theorem \ref{theo:penal}.
\item Finally, Section 6 is devoted to prove that, with our normalizations, the process $(N_t^{(a)}:=(s(X_t)-s(a))^+-L_t^a, t\geq0)$ is a martingale.
\end{enumerate}

\section{Preliminaries}

\noindent
In this section, we essentially recall some known results that we shall need in the sequel.\\

\noindent
$\bullet$ Let  $T_a:=\inf\{u\geq0; X_u=a\}$ be the first passage time of $X$ to level $a$. Its Laplace transform is given by
\begin{equation}\label{eq:Ta}\E_x\left[e^{-\lambda T_a}\right]=\frac{u_\lambda(a,x)}{u_\lambda(a,a)}.
\end{equation}
Since $(X_t, t\geq0)$ is assumed to be null recurrent, we have for $x>a$, $\E_x[T_a]=+\infty$.

\noindent
$\bullet$ We define $(\widehat{X}_t, t\geq0)$  the diffusion $(X_t, t\geq0)$ killed at $a$:
$$
\widehat{X}_t:= 
\begin{cases} 
X_t & \text{ $t<T_a$},\\
\partial & \text{ $t\geq T_a$}.
\end{cases}
$$
where $\partial$ is a cemetary point.
We denote by  $\widehat{q}(t,x,y)$  its transition density with respect to $m$:
$$\widehat{\Pb}_x(\widehat{X}_t\in dy)= \widehat{q}(t,x,y)m(dy)=\Pb_x\left(X_t\in dy; t<T_a\right).$$

\noindent
$\bullet$ We also introduce $(X^{\uparrow a}_t, t\geq0)$ the diffusion $(\widehat{X}_t, t\geq0)$ conditionned not to touch $a$, following the construction in \cite{SVY}. For $x>a$ and $F_t$ a positive, bounded and $\F_t$-measurable r.v.:
$$
\E_x^{\uparrow a}\left[F_t\right]=\frac{1}{s(x)-s(a)} \E_x\left[F_t (s(X_t)-s(a))1_{\{t<T_a\}} \right]. 
$$
By taking $F_t=f(X_t)$, we deduce in particular that, for $x,y>a$:
$$q^{\uparrow a}(t,x,y)=\frac{\widehat{q}(t,x,y)}{(s(x)-s(a))(s(y)-s(a))}\quad \text{ and }\quad m^{\uparrow a}(dy)= (s(y)-s(a))^2 m(dy).$$
Letting  $x$ tend towards $a$, we obtain:
\begin{equation*}
q^{\uparrow a}(t,a,y)=\frac{n_{y,a}(t)}{s(y)-s(a)} \quad \text{ where }\; \Pb_y(T_a\in dt)=: n_{y,a}(t)dt.\end{equation*}

\noindent
$\bullet$ We finally define $(X^{x,t,y}_u,u\leq t)$ the bridge of $X$ of length $t$ going from $x$ to $y$. Its law may be obtained as a \textit{$h$}-transform, for $u<t$:
\begin{equation}\label{W12 mart}
\E^{x,t,y}\left[F_u\right]=\E_x\left[\frac{q(t-u,X_u,y)}{q(t,x,y)}F_u\right].
\end{equation}

With these notation, we may state the two following Propositions which are essentially due to Salminen.
\begin{proposition}[\cite{Sal}]\label{prop:ga} \ \\ \vspace*{-.3cm}
\begin{enumerate}
\item The law of $g_a^{(t)}:=\sup\{u\leq t; X_u=a\}$ is given by: 
\begin{equation}\label{gat}
\Pb_x(g_a^{(t)}\in du)=\Pb_x(T_a>t) \delta_0(du) + q(u,x,a)\nu^{(a)}([t-u, +\infty[)du.
\end{equation}
\item On the event $\{X_t> a\}$, the density of the couple $(g_a^{(t)}, X_t)$ reads :
\begin{equation}\label{eq:gatX}
\Pb_x\left(g_a^{(t)}\in du, X_t\in dy\right)=\Pb_x(T_a>t, X_t\in dy)\delta_0(du)+ \frac{q(u,x,a)}{s(y)-s(a)}\Pb_a^{\uparrow a}(X_{t-u}\in dy) du \quad (y>a)
\end{equation}
\end{enumerate}
\end{proposition}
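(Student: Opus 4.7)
The plan is to apply the Itô excursion theory at the level $a$. Recall that on $\{T_a < +\infty\}$ the excursions of $X$ away from $a$ form a Poisson point process on the local time scale with intensity the Itô excursion measure $n^a$, and that $\nu^{(a)}$ is the push-forward of $n^a$ under the lifetime map $e \mapsto \zeta(e)$: $\nu^{(a)}(dr) = n^a(\zeta \in dr)$.

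For Point 1, I would split according to whether $X$ has visited $a$ by time $t$. On $\{T_a > t\}$ one has $g_a^{(t)} = 0$, which contributes the atom $\Pb_x(T_a > t)\delta_0(du)$. On the complementary event, $g_a^{(t)} = \tau^{(a)}_{L_t^a -}$ is the left endpoint of the unique excursion straddling $t$, whose lifetime must exceed $t - g_a^{(t)}$. Applying the compensation formula of excursion theory to the functional $\Phi(u,e) = f(u)1_{\{u \leq t,\; \zeta(e) > t - u\}}$ yields
\[
\E_x\bigl[f(g_a^{(t)}) 1_{\{T_a \leq t\}}\bigr] = \E_x\Bigl[\int_0^t f(u)\,\nu^{(a)}([t - u, +\infty[)\,dL_u^a\Bigr] = \int_0^t f(u)\,\nu^{(a)}([t - u, +\infty[)\,q(u,x,a)\,du,
\]
after using $\E_x[dL_u^a] = q(u,x,a)\,du$, which is the standard restatement of the normalization of $L^a$ combined with the symmetry of $q$.

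For Point 2, I would run the same argument with the additional factor $h(X_t)$ for $h$ supported in $(a,+\infty)$: the compensation formula applied to $\Phi(u,e) = f(u)h(e_{t-u})1_{\{u \leq t,\; \zeta(e) > t-u\}}$ produces
\[
\E_x\bigl[f(g_a^{(t)})h(X_t) 1_{\{X_t > a,\, T_a \leq t\}}\bigr] = \int_0^t f(u)\,q(u,x,a)\int_a^{+\infty} h(y)\,K_{t-u}(a,dy)\,du,
\]
where $K_s(a,dy) := n^a(e_s \in dy,\,\zeta > s)$ is the entrance law of the excursion measure. The crux is then to identify $K_s(a,dy) = n_{y,a}(s)\,m(dy)$: the Markov property of $e$ under $n^a$ gives $n^a(e_s \in dy,\,\zeta \in dr) = K_s(a,dy)\,n_{y,a}(r-s)\,dr$, while the time-reversal symmetry of $n^a$ at $\zeta$ yields $n^a(e_s \in dy,\,\zeta \in dr) = n^a(e_{r-s} \in dy,\,\zeta \in dr)$, so that $K_s(a,dy) = C(dy)\,n_{y,a}(s)$ for some measure $C$; integrating in $s$ and using the occupation density identity $\int_0^{+\infty} K_s(a,\cdot)\,ds = m$ together with $\int_0^{+\infty} n_{y,a}(s)\,ds = 1$ (by recurrence) forces $C = m$. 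Rewriting $K_{t-u}(a,dy) = \Pb_a^{\uparrow a}(X_{t-u} \in dy)/(s(y)-s(a))$ via the formula $\Pb_a^{\uparrow a}(X_s \in dy) = n_{y,a}(s)(s(y)-s(a))\,m(dy)$ recalled in the introduction, and adding the contribution from $\{T_a > t\}$, delivers the stated joint density.

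The main obstacle is the identification of the entrance law $K_s$ with the first-passage density $n_{y,a}$, which hinges on both the time-reversal symmetry of $n^a$ and the recognition of $m$ as its invariant mass via the occupation density identity. A subsidiary technical point is that $n^a$ is only $\sigma$-finite and $\nu^{(a)}$ may be infinite near $0$, so the compensation formula should be applied first to functionals supported on excursions of length $> \varepsilon$ and then extended by monotone convergence as $\varepsilon \downarrow 0$.
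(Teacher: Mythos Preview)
The paper does not supply its own proof of this proposition; it is stated with attribution to Salminen \cite{Sal} and used as a black box, so there is no in-paper argument to compare against.

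Your excursion-theoretic derivation is the standard route to these formulae and is essentially correct. The compensation formula applied to the excursion straddling $t$ delivers Point~1 directly, and for Point~2 the crux is indeed the identification of the entrance law $K_s(a,dy)=n_{y,a}(s)\,m(dy)$. Two remarks on that step: first, the time-reversal invariance of $n^a$ that you invoke holds because the diffusion is $m$-symmetric (equivalently $q(t,x,y)=q(t,y,x)$), a fact worth stating explicitly; second, one can bypass the reversal argument altogether by computing the Laplace transform of the entrance law via the compensation formula,
\[
\int_0^{+\infty} e^{-\lambda s}\,K_s(a,dy)\,ds \;=\; \frac{u_\lambda(a,y)}{u_\lambda(a,a)}\,m(dy)\;=\;\E_y\!\left[e^{-\lambda T_a}\right] m(dy)\;=\;\int_0^{+\infty} e^{-\lambda s}\,n_{y,a}(s)\,ds\;m(dy),
\]
which gives $K_s(a,dy)=n_{y,a}(s)\,m(dy)$ by Laplace inversion and recovers your occupation identity $\int_0^{+\infty}K_s(a,\cdot)\,ds=m$ upon letting $\lambda\downarrow 0$ and using recurrence. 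Either way the conclusion matches the formula recalled in Section~2 for $\Pb_a^{\uparrow a}(X_s\in dy)$.
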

\noindent

\noindent
We now study the pre- and post- $g_a^{(t)}$-process:
\begin{proposition}\label{prop:+-ga} Under $\Pb_x$: 
\begin{enumerate}[$i)$]
\item Conditionnally to $g_a^{(t)}$, the process $(X_s, s\leq g_a^{(t)})$ and 
$(X_{g_a^{(t)}+s}, s\leq t-g_a^{(t)})$ are independent.
\item Conditionnally to $g_a^{(t)}=u$,
$$(X_s, s \leq u) \= (X_s^{x,u,a}, s\leq u).$$
\item Conditionnally to $g_a^{(t)}=u$ and $X_t=y>a$,
$$(X_{u+s}, s\leq t-u)\= \left(X_s^{\uparrow a\; a,t-u,y}, s\leq t-u \right).$$
\end{enumerate}
\end{proposition}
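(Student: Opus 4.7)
The plan is to identify the joint law of the pre-$g_a^{(t)}$ path, $g_a^{(t)}$, the post-$g_a^{(t)}$ path, and $X_t$. Given bounded measurable functionals $F,G$ on path space and a bounded $\psi$ on $\R^+$, I would compute
\[
J := \E_x\!\left[F\bigl((X_s)_{s \leq g_a^{(t)}}\bigr)\, G\bigl((X_{g_a^{(t)}+s})_{s \leq t - g_a^{(t)}}\bigr)\, \psi(X_t) \right]
\]
and check that it coincides with the integral predicted by $(i)$--$(iii)$. The contribution from the atom $\{g_a^{(t)}=0\}=\{T_a>t\}$ is immediate (the pre-process is then just $\{x\}$), so I would concentrate on $\{g_a^{(t)} \in (0,t]\}$, where the continuity of $X$ forces $X_{g_a^{(t)}}=a$.

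The key identity, for $u \in (0,t]$, comes from the Markov property at $u$ after observing that $\{g_a^{(t)} \leq u\}$ coincides a.s.\ with $\{X_s \neq a,\ s \in (u,t]\}$:
\[
\Pb_x\bigl(g_a^{(t)} \leq u \,\bigm|\, \F_u\bigr) = \Pb_{X_u}(T_a > t-u).
\]
Multiplying by an $\F_u$-measurable $F_u$ and a functional $G$ of the shifted path, and invoking the Markov property once more, the joint density at $g_a^{(t)}=u$ factorizes into a pre-$u$ piece weighted by $q(u,x,a)$ and a post-$u$ piece weighted by $\nu^{(a)}([t-u,+\infty[)$, in agreement with the density $q(u,x,a)\,\nu^{(a)}([t-u,+\infty[)\,du$ of Proposition~\ref{prop:ga}. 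This factorization already encodes the conditional independence claimed in $(i)$.

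The pre-$u$ piece is matched using the bridge formula $(\ref{W12 mart})$: the conditional law of $(X_s)_{s \leq u}$ given $X_u=a$ is precisely the bridge $X^{x,u,a}$, which yields $(ii)$. For $(iii)$, I restrict to $\{X_t \in dy,\ y>a\}$; by continuity the post-$u$ trajectory starts at $a$, stays strictly above $a$ on $(0,t-u]$, and equals $y$ at time $t-u$. Combining the identity $q^{\uparrow a}(t,a,y)=n_{y,a}(t)/(s(y)-s(a))$ recalled in Section~2 with the bridge $h$-transform, this trajectory is distributed as $X^{\uparrow a\; a, t-u, y}$, matching the density $(\ref{eq:gatX})$.

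The main difficulty is $(iii)$: under $\Pb_a$ one has $T_a=0$ a.s., so the conditioning ``$T_a > t-u$'' is singular and must be interpreted via the It\^o excursion measure at $a$ or, equivalently, via the limit of $\Pb_{a+\varepsilon}$ as $\varepsilon \downarrow 0$ that defines $X^{\uparrow a}$ starting from $a$. This limiting argument, together with the identification of the bridge of $X^{\uparrow a}$ via $(\ref{eq:gatX})$, is the only non-routine step; the rest reduces to the Markov property and the bridge formula.
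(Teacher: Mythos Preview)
Your approach is correct in outline but takes a different route from the paper. The paper does not carry out the joint computation you sketch: for $(i)$ it simply invokes Millar's decomposition theorem (applied to the diffusion killed at time $t$), and for $(ii)$ and the first half of $(iii)$ it quotes Salminen's last-exit results, which already give that the post-$g_a^{(t)}$ process, conditionally on $g_a^{(t)}=u$ and $X_t=y>a$, is a bridge of the \emph{killed} diffusion $\widehat{X}^{\,a,t-u,y}$. The only computation the paper actually performs is to show that the bridges of $\widehat{X}$ and of $X^{\uparrow a}$ coincide, via a short finite-dimensional-distribution calculation using the $h$-transform relation $\widehat{q}(t,x,y)=(s(x)-s(a))(s(y)-s(a))\,q^{\uparrow a}(t,x,y)$ and then letting the starting point tend to $a$. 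This neatly sidesteps the singular conditioning you flag as the ``main difficulty'': rather than interpreting $\Pb_a(\,\cdot\mid T_a>t-u)$ through excursion theory or an $\varepsilon\downarrow0$ limit of the full process, one works with bridges of $\widehat{X}$ starting from $x>a$ (where everything is nonsingular), identifies them with bridges of $X^{\uparrow a}$, and only then passes to the limit $x\to a$ inside the $X^{\uparrow a}$ family, where the entrance law is already built in. Your direct Markov-property argument is more self-contained and would also work, but the bridge-equivalence trick is the cleaner way to handle $(iii)$.
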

\noindent
\begin{proof}
$i)$ Point $(i)$ follows from Proposition 5.5 of \cite{Mil} applied to the diffusion 
$$\displaystyle X_s^{(t)}:= 
\begin{cases}
X_s & \text{ $s<t$}\\
\partial & \text{ $s\geq t$}
\end{cases}$$
so that $\xi:=\inf\{s\geq 0;\; X_s^{(t)}\notin \R^+ \}=t$.\\
\\
$ii)$ Point $(ii)$ is taken from \cite{Sal}.\\
$iii)$ As for Point $(iii)$, still from $\cite{Sal}$, conditionnally to $g_a^{(t)}=u$ and $X_t=y>a$, we have:
$$(X_{u+s}, s\leq t-u)\= \left(\widehat{X}_s^{a,t-u,y}, s\leq t-u \right).$$
But the bridges of $\widehat{X}$ et $X^\uparrow$ have the same law. Indeed, for $y,x>a$:\\
\\
$\displaystyle\widehat{\Pb}^{\,x,t,y}\left(X_{t_1}\in dx_1, \ldots, X_{t_n}\in dx_n\right)$
\vspace{-.2cm}
\begin{align*}
&= \widehat{\E}_x\left[\frac{\widehat{q}(t-t_n,X_{t_n},y)}{\widehat{q}(t,x,y)}1_{\{X_{t_1}\in dx_1, \ldots, X_{t_n}\in dx_n\}}   \right]\quad \text{(from (\ref{W12 mart}))}\\
&=\E_x\left[\frac{(s(X_{t_n})-s(a))q^{\uparrow a}(t-t_n,X_{t_n},y)}{(s(x)-s(a)) q^{\uparrow a}(t,x,y)}1_{\{X_{t_1}\in dx_1, \ldots, X_{t_n}\in dx_n\}}1_{\{t_n<T_a\}} \right]\\
&=\E_x^{\uparrow a}\left[\frac{q^{\uparrow a}(t-t_n,X_{t_n},y)}{q^{\uparrow a}(t,x,y)}1_{\{X_{t_1}\in dx_1, \ldots, X_{t_n}\in dx_n\}}\right]\quad \text{(by definition of $\Pb^{\uparrow a}_x$)}\\
&= \Pb^{\uparrow a\; x,t,y}\left(X_{t_1}\in dx_1, \ldots, X_{t_n}\in dx_n\right).
\end{align*}
and the result follows by letting $x$ tend toward $a$.\\
\end{proof}

\section{Study of asymptotics}

The aim of this section is to prove Theorem \ref{theo:equi}. We start with the case $\nu\in \mathcal{R}$.

\subsection{Proof of Theorem \ref{theo:equi} when $\nu\in \mathcal{R}$}

Let  $(F_t, t\geq0)$ be a decreasing, positive and predictable process such that
 $$\displaystyle0<\E_x\left[\int_0^{+\infty} F_u dL_u^a\right]<\infty.$$
 
Our approach in this section is based on the study of the Laplace transform of $t\longmapsto \E_x\left[F_{g_a^{(t)}}\right]$. Indeed, from Propositions \ref{prop:ga} and \ref{prop:+-ga}, we may write, applying Fubini's Theorem:
\begin{align}
\notag&\int_0^{+\infty}e^{-\lambda t}\E_x\left[F_{g_a^{(t)}}\right]dt\\
\notag&= \int_0^{+\infty}e^{-\lambda t}\int_0^t \E_x\left[F_u|g_a^{(t)}=u\right]\Pb(g_a^{(t)}\in du) dt\\
\notag&=\E_x[F_0] \int_0^{+\infty} e^{-\lambda t}  \Pb_x(T_a>t) dt +  \int_0^{+\infty} e^{-\lambda t} \int_0^t  \E_x\left[F_u|X_u=a\right]  q(u,x,a)\nu^{(a)}([t-u,+\infty[)du\,dt\\
\label{eq:LFga}&= \E_x\left[F_0\right] \frac{1-\E_x\left[e^{-\lambda T_a}\right]}{\lambda}
+\int_0^{+\infty} e^{-\lambda t} \Pb^{x,t,a}(F_t)  q(t,x,a) dt \times\int_0^{+\infty} e^{-\lambda t} \nu^{(a)}([t,+\infty[)dt
\end{align}
We shall now study the asymptotic (when $\lambda \rightarrow 0$) of each term separately.  To this end, we state and prove two Lemmas.

\subsubsection{The Laplace transform of $t\rightarrow\nu^{(a)}([t,+\infty[)$}
\begin{lemma}\label{lem:Lnu}
The following formula holds:
$$\frac{1}{\lambda u_\lambda(a,a)}=\int_0^{+\infty} e^{-\lambda t} \nu^{(a)}([t,+\infty[)dt$$
\end{lemma}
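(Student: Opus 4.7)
The identity reduces to the classical relation between the Laplace exponent of the inverse local time subordinator $(\tau_l^{(a)}, l\geq 0)$ and the resolvent kernel of $X$ at $(a,a)$. My plan has three ingredients.

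First, I would denote by $\Phi^{(a)}$ the Laplace exponent of $(\tau_l^{(a)}, l\geq 0)$, so that $\E_a[e^{-\lambda \tau_l^{(a)}}] = e^{-l\Phi^{(a)}(\lambda)}$ for every $l,\lambda \geq 0$. Since $a$ is regular and instantaneously reflecting (or a regular interior point) and the diffusion has no sticky behaviour at $a$, the subordinator has no drift, so the Lévy-Khintchine formula gives $\Phi^{(a)}(\lambda) = \int_0^{+\infty}(1-e^{-\lambda t})\,\nu^{(a)}(dt)$. An elementary application of Fubini then yields
$$\Phi^{(a)}(\lambda) = \lambda \int_0^{+\infty} e^{-\lambda t}\,\nu^{(a)}([t,+\infty[)\,dt.$$

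Second, I would compute $\E_a\left[\int_0^{+\infty} e^{-\lambda t}\,dL_t^a\right]$ in two different ways. On one hand, the occupation times formula together with the normalization of $L^a$ identifies this quantity with the resolvent kernel, giving $u_\lambda(a,a)$. On the other hand, the time-change $t = \tau_l^{(a)}$ (which is licit since $L^a$ grows only on the closure of the zero set of $X-a$, and outside of this set $t\mapsto L_t^a$ is constant) rewrites the integral as
$$\int_0^{+\infty} \E_a\bigl[e^{-\lambda \tau_l^{(a)}}\bigr]\,dl = \int_0^{+\infty} e^{-l\Phi^{(a)}(\lambda)}\,dl = \frac{1}{\Phi^{(a)}(\lambda)}.$$
Comparing the two expressions gives $\Phi^{(a)}(\lambda) = 1/u_\lambda(a,a)$.

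Combining the two displayed identities yields
$$\frac{1}{u_\lambda(a,a)} = \lambda \int_0^{+\infty} e^{-\lambda t}\,\nu^{(a)}([t,+\infty[)\,dt,$$
which is the claim after dividing by $\lambda$. The only delicate point is the absence of a drift in $\tau^{(a)}$; everything else is bookkeeping. This follows from the fact that $\{u \geq 0:\ X_u = a\}$ has zero Lebesgue measure almost surely for a regular null recurrent diffusion, which means that the set of continuity points of $l \mapsto \tau_l^{(a)}$ contributes nothing (equivalently, the drift coefficient in the Lévy-Khintchine decomposition of $\tau^{(a)}$ vanishes). Alternatively, one may short-circuit the argument by invoking directly the well-known formula (see, e.g., Borodin-Salminen) $\E_a\left[\int_0^{+\infty} e^{-\lambda t}\,dL_t^a\right] = u_\lambda(a,a)$ and the standard identity $\E_a[e^{-\lambda \tau_l^{(a)}}] = \exp(-l/u_\lambda(a,a))$.
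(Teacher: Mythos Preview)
Your proof is correct and follows essentially the same route as the paper: both combine the L\'evy--Khintchine representation $\Phi^{(a)}(\lambda)=\int_0^\infty(1-e^{-\lambda t})\,\nu^{(a)}(dt)$ (with no drift) with the classical identity $\Phi^{(a)}(\lambda)=1/u_\lambda(a,a)$. The only notable difference is in the passage from $\int_0^\infty(1-e^{-\lambda t})\,\nu^{(a)}(dt)$ to $\lambda\int_0^\infty e^{-\lambda t}\,\overline{\nu}^{(a)}(t)\,dt$: you do this in one line via Fubini (writing $1-e^{-\lambda t}=\int_0^t\lambda e^{-\lambda s}\,ds$ and switching the order), whereas the paper integrates by parts on $[\varepsilon,\infty)$, picks up a boundary term $(1-e^{-\lambda\varepsilon})\,\nu^{(a)}([\varepsilon,\infty[)$, and then spends a paragraph showing $\lim_{\varepsilon\to0}\varepsilon\,\nu^{(a)}([\varepsilon,\infty[)=0$ using the integrability condition $\int_0^1 t\,\nu^{(a)}(dt)<\infty$. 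Your Fubini step sidesteps this boundary analysis entirely; on the other hand, the paper simply invokes the identity $\E_a[e^{-\lambda\tau_l^{(a)}}]=e^{-l/u_\lambda(a,a)}$ rather than re-deriving it via the two computations of $\E_a\bigl[\int_0^\infty e^{-\lambda t}\,dL_t^a\bigr]$. Regarding the absence of drift, the paper justifies it by the standing assumption that $m$ has no atoms, which is equivalent to your observation that $\{u:X_u=a\}$ has zero Lebesgue measure.
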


\begin{proof}
Since $\tau$ is a subordinator and $m$ has no atoms, from the L\'evy-Khintchine formula:
$$\E_a\left[e^{-\lambda \tau_l^{(a)}}\right]=\exp\left(l\int_0^{+\infty}  (1-e^{-\lambda t})\nu^{(a)}(dt)\right).$$
Then, from the classic relation:
$$\E_a\left[e^{-\lambda \tau_l^{(a)}}\right]=e^{-l/u_{\lambda(a,a)}}$$
we deduce that 
$$\frac{1}{u_\lambda(a,a)}=\int_0^{+\infty}  (1-e^{-\lambda t})\nu^{(a)}(dt).$$
Now, let $\varepsilon>0$ :
\begin{eqnarray*}
\int_\varepsilon^\infty (1-e^{-\lambda t})\nu^{(a)}(dt)&=&\big[(e^{-\lambda t}-1)\nu^{(a)}([t,+\infty[)\big]_\varepsilon^{+\infty} + \int_\varepsilon^\infty\lambda e^{-\lambda t} \nu^{(a)}([t,+\infty[) dt\\
&=&(1-e^{-\lambda \varepsilon}) \nu^{(a)}([\varepsilon,+\infty[) + \int_\varepsilon^\infty\lambda e^{-\lambda t} \nu^{(a)}([t,+\infty[ )dt
\end{eqnarray*}
Since both terms are positive, we may let $\varepsilon\rightarrow 0$ to obtain:
\begin{equation*}
\frac{1}{\lambda u_\lambda(a,a)} = \int_0^\infty e^{-\lambda t} \nu^{(a)}([t,+\infty[) dt + \ell,
\end{equation*}
where $\ell:=\lim\limits_{ \varepsilon\rightarrow 0}  \varepsilon  \nu([\varepsilon,+\infty[)$, and it remains to prove that $\ell=0$. Assume that $\ell>0$. Then:
$\displaystyle \nu^{(a)}([\varepsilon,+\infty[)\equi_{\varepsilon\rightarrow0}\frac{\ell}{\varepsilon}$ and :
\begin{align*}
\int_\varepsilon^1 t\nu^{(a)}(dt) &= \big[-t \nu^{(a)}([t,1]) \big]_\varepsilon^1 + \int_\varepsilon^1 \nu^{(a)}([t,1]) dt\\
&= \varepsilon\nu^{(a)}([\varepsilon,1])+\int_\varepsilon^1 \nu^{(a)}([t,1]) dt\\
&\xrightarrow[\varepsilon\rightarrow0]{} +\infty,
\end{align*}
since, from our hypothesis,  $\displaystyle\nu^{(a)}([t,1]) \equi_{u \rightarrow 0} \frac{\ell}{t}$, i.e. $t\mapsto\nu^{(a)}([t,1])$ is not integrable at 0. This contradicts the fact that $\nu^{(a)}$ is the L\'evy measure of a subordinator, hence $\ell=0$ and the proof is completed.\\
\end{proof}

\begin{remark}
Since we assume that $(X_t, t\geq0)$ is a null recurrent diffusion, we have $m(\R^+)=+\infty$ and  from Salminen \cite{Sal3}:
\begin{equation}\label{eq:Sal}
\lim_{\lambda\rightarrow 0} \lambda u_\lambda(a,a)=\frac{1}{m(\R^+)}=0.
\end{equation}
Thus, from the monotone convergence theorem, the function $t\rightarrow \nu^{(a)}([t,+\infty[)$ is not integrable at $+\infty$.
On the other hand, if $(X_t, t\geq0)$ is positively recurrent, we obtain:
$$\int_0^{+\infty}  \nu^{(a)}([t,+\infty[) dt =m(\R^+) <+\infty.$$
\end{remark}

\noindent
We now study the asymptotic of the first hitting time of $X$ to level $a$.
\begin{lemma}\label{lem:TaR}
Let $x>a$ and assume that $\nu$ belongs to class $\mathcal{R}$. Then:
\begin{enumerate}[$i)$]
\item  The tails of $\nu$ and $\nu^{(a)}$ are equivalent:
$$\displaystyle \nu^{(a)}([t,+\infty[) \equi_{t\rightarrow+\infty}\nu([t,+\infty[).$$
\item The survival function of $T_a$ satisfies the following property: 
\begin{equation}\label{eq:equi2}
 \Pb_x(T_a\geq t) \equi_{t\rightarrow+\infty}\; (s(x)-s(a)) \nu([t,+\infty[).
 \end{equation}

\end{enumerate}

\end{lemma}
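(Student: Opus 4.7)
My plan is to prove both parts by passing to Laplace transforms and invoking Karamata's Tauberian theorem, using Lemma \ref{lem:Lnu} as the bridge between tail asymptotics of the L\'evy measure and the behavior of the resolvent.

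For $(i)$, Lemma \ref{lem:Lnu} applied at $a$ and at $0$ gives
$$\int_0^{+\infty} e^{-\lambda t}\,\nu^{(a)}([t,+\infty[)\,dt = \frac{1}{\lambda u_\lambda(a,a)}, \qquad \int_0^{+\infty} e^{-\lambda t}\,\nu([t,+\infty[)\,dt = \frac{1}{\lambda u_\lambda(0,0)}.$$
Combining (\ref{eq:Ta}) (applied at $x=0$ and at $x=a$) with the symmetry $u_\lambda(a,0)=u_\lambda(0,a)$ of the resolvent yields
$$\frac{u_\lambda(a,a)}{u_\lambda(0,0)} = \frac{\E_a[e^{-\lambda T_0}]}{\E_0[e^{-\lambda T_a}]}\xrightarrow[\lambda\to 0]{}1,$$
since null recurrence forces $T_0$ and $T_a$ to be a.s.\ finite, so the ratio tends to $1$ by dominated convergence. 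The hypothesis $\nu\in\mathcal{R}$ writes $\nu([t,+\infty[)\sim\eta(t)/t^\beta$ with $\beta\in\,]0,1[$ and $\eta$ slowly varying, and Karamata's Tauberian theorem gives $\int_0^{+\infty} e^{-\lambda t}\nu([t,+\infty[)\,dt \sim \Gamma(1-\beta)\eta(1/\lambda)/\lambda^{1-\beta}$ as $\lambda\to 0$. The equivalence $u_\lambda(a,a)\sim u_\lambda(0,0)$ transfers this asymptotic to the Laplace transform of $\nu^{(a)}([\cdot,+\infty[)$, and applying Karamata in reverse (the monotone density theorem, valid because $t\mapsto\nu^{(a)}([t,+\infty[)$ is non-increasing) delivers $\nu^{(a)}([t,+\infty[)\sim\nu([t,+\infty[)$.

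For $(ii)$, I proceed in the same spirit:
$$\int_0^{+\infty} e^{-\lambda t}\,\Pb_x(T_a\geq t)\,dt = \frac{1-\E_x[e^{-\lambda T_a}]}{\lambda} = \frac{u_\lambda(a,a)-u_\lambda(a,x)}{\lambda u_\lambda(a,a)}.$$
In view of $(i)$ and the monotonicity of $t\mapsto\Pb_x(T_a\geq t)$, a final Karamata argument reduces the proof of $(ii)$ to the key limit
$$\lim_{\lambda\to 0}\bigl(u_\lambda(a,a)-u_\lambda(a,x)\bigr) \;=\; s(x)-s(a), \qquad x>a. \qquad(\star)$$

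The principal obstacle is $(\star)$: both $u_\lambda(a,a)$ and $u_\lambda(a,x)$ diverge as $\lambda\to 0$ by (\ref{eq:Sal}), so one must extract their exact $O(1)$ cancellation. I would establish $(\star)$ by appealing to the spectral representation of the resolvent through the fundamental solutions $\phi_\lambda,\psi_\lambda$ of the eigenvalue equation $\frac{d}{dm}\frac{df}{ds}=\lambda f$ (with $\phi_\lambda$ decaying at $+\infty$ and $\psi_\lambda$ adapted to the reflection at $0$): writing $u_\lambda(a,a)-u_\lambda(a,x) = w_\lambda^{-1}\psi_\lambda(a)\bigl(\phi_\lambda(a)-\phi_\lambda(x)\bigr)$ and integrating the ODE twice gives
$$\phi_\lambda(a)-\phi_\lambda(x) \;=\; \lambda\int_a^{+\infty}\phi_\lambda(z)\bigl(s(x\wedge z)-s(a)\bigr)m(dz),$$
whose dominant contribution as $\lambda\to 0$ is $(s(x)-s(a))\int_x^{+\infty}\phi_\lambda(z)\,m(dz)$. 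The total-mass relation $\int_0^{+\infty} u_\lambda(0,z)\,m(dz)=1/\lambda$ identifies this tail with $\phi_\lambda(0)/(\lambda u_\lambda(0,0))$ modulo a bounded piece, and combining with $u_\lambda(a,a)\sim u_\lambda(0,0)$ from $(i)$ together with the convergence of $\phi_\lambda$ to a constant produces $(\star)$. An alternative route would exploit the martingale $N_t^{(a)}=(s(X_t)-s(a))^+ - L_t^a$ of Section 6: optional stopping at $T_a\wedge t$ under $\Pb_x$ (for $x>a$) yields $\E_x[(s(X_t)-s(a))\,1_{\{T_a>t\}}] = s(x)-s(a)$, from which $(\star)$ follows after a Laplace transform computation.
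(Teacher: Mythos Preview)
Your overall strategy coincides with the paper's: both parts are proved by passing to Laplace transforms via Lemma~\ref{lem:Lnu} and~(\ref{eq:Ta}), then invoking the Tauberian theorem~(\ref{eq:Taub}) in both directions (the reverse direction being licit by monotonicity). The two proofs differ only in how the underlying resolvent limits are established.

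For part~$(i)$, your argument that $u_\lambda(a,a)/u_\lambda(0,0)\to 1$ via the identity
\[
\frac{u_\lambda(a,a)}{u_\lambda(0,0)}=\frac{\E_a[e^{-\lambda T_0}]}{\E_0[e^{-\lambda T_a}]}
\]
(obtained from~(\ref{eq:Ta}) and the symmetry $u_\lambda(0,a)=u_\lambda(a,0)$) is correct and in fact more elementary than the paper's route, which invokes Krein's spectral representation $u_\lambda(x,y)=\Phi(x,\lambda)\bigl(u_\lambda(0,0)\Phi(y,\lambda)-\Psi(y,\lambda)\bigr)$ together with the normalizations $\Phi(0,\lambda)=1$, $\Psi(0,\lambda)=0$. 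Your hitting-time trick bypasses spectral theory entirely.

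For part~$(ii)$, both proofs isolate the same key limit~$(\star)$. The paper establishes it cleanly from the Krein representation: expanding $u_\lambda(a,a)-u_\lambda(a,x)$ in terms of $\Phi,\Psi$ and using $\Phi(\cdot,\lambda)\to 1$, $\Psi(\cdot,\lambda)\to s(\cdot)$, and $\lambda u_\lambda(0,0)\to 0$ gives the limit $s(x)-s(a)$ directly, with no asymptotic balancing needed. Your first sketch via $w_\lambda^{-1}\psi_\lambda(a)(\phi_\lambda(a)-\phi_\lambda(x))$ heads in the same direction but is less complete: to close it you must control $\psi_\lambda(a)/\psi_\lambda(0)$ and the compact piece $\int_0^x\phi_\lambda\,dm$ as $\lambda\to 0$, which in effect reproduces the paper's $\Phi,\Psi$ normalizations. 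Your alternative via the martingale $N^{(a)}$ is correct as far as it goes---optional stopping at $T_a\wedge t$ does yield $\E_x[(s(X_t)-s(a))1_{\{T_a>t\}}]=s(x)-s(a)$---but the passage from this identity to~$(\star)$ is not spelled out; it essentially amounts to Lemma~\ref{lem:Iqq} combined with dominated convergence, which again requires knowing $q(\cdot,a,a)-q(\cdot,a,x)\in L^1$.
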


\begin{proof}
We shall use the following Tauberian theorem (see Feller \cite[Chap. XIII.5, p.446]{Fel} or \cite[Section 1.7]{BGT}):
\\

\textit{Let $f$ be a positive and decreasing function, $\beta\in]0,1[$ and $\eta$ a slowly varying function. Then,}
\begin{equation}\label{eq:Taub}
f(t)\; \equi_{t\rightarrow+\infty} \;\frac{\eta(t)}{t^{\beta}} \quad\Longleftrightarrow\quad 
\int_0^\infty e^{-\lambda t}f(t)dt\; \equi_{\lambda\rightarrow0}\;\frac{\Gamma(\beta)}{\lambda^{1-\beta}}\eta\left(\frac{1}{\lambda}\right).
\end{equation}
In particular, with $f(t)=\nu([t,+\infty[)$ (since $\nu\in\mathcal{R}$), we obtain:
$$\int_0^\infty e^{-\lambda t} \nu([t, +\infty[) dt =\frac{1}{\lambda u_\lambda(0,0)} \; \equi_{\lambda\rightarrow0}\;\frac{\Gamma(\beta)}{\lambda^{1-\beta}}\eta\left(\frac{1}{\lambda}\right).$$
Now, from Krein's Spectral Theory (see for instance \cite[Chap.5]{DMK}, \cite{KK}, \cite{KW} or \cite{Kas}), $u_\lambda(x,y)$ admits the representation, for $x\leq y$:
\begin{equation}\label{eq:Krein}
u_\lambda(x,y)=\Phi(x,\lambda)\left(u_\lambda(0,0)\Phi(y,\lambda)-\Psi(y,\lambda)\right)
\end{equation}
where the eigenfunctions $\Phi$ and $\Psi$ are solutions of:
$$\begin{cases}
\displaystyle\Phi(x,\lambda)=1+\lambda \int_0^x s^\prime(dy) \int_0^y \Phi(z,\lambda)m(dz),\\
\displaystyle\Psi(x,\lambda)=s(x)+\lambda \int_0^x s^\prime(dy) \int_0^y \Psi(z,\lambda)m(dz),
\end{cases}$$
We deduce then, since $\displaystyle \lim_{\lambda\rightarrow 0}\Phi(x,\lambda)=1$, $\displaystyle\lim_{\lambda\rightarrow 0}\Psi(x,\lambda)=s(x)$ and  $\displaystyle\lim_{\lambda\rightarrow 0}u_\lambda(0,0)=+\infty$ that:
$$\frac{u_{\lambda}(a,a)}{u_\lambda(0,0)}=\Phi(a,\lambda)^2 - \frac{\Phi(a,\lambda)\Psi(a,\lambda)}{u_\lambda(0,0)}\xrightarrow[\lambda\rightarrow 0]{}1.$$
Therefore, from the Tauberien theorem (\ref{eq:Taub}) with $f(t)=\nu^{(a)}([t,+\infty[)$, we obtain:
$$\nu^{(a)}([t, +\infty[)\; \equi_{t\rightarrow+\infty} \;\frac{\eta(t)}{t^{\beta}}$$
i.e. Point $(i)$ of Lemma \ref{lem:TaR}.\\
\noindent
To prove Point $(ii)$, let us compute the Laplace transform of $\Pb_x(T_a\geq t)$, using (\ref{eq:Ta}):
\begin{equation}\label{eq:LTa}
\int_0^{+\infty} e^{-\lambda t}\Pb_x(T_a\geq t) dt =\frac{1-\E_x\left[e^{-\lambda T_a}\right]}{\lambda} =\frac{1}{\lambda}-\frac{u_\lambda(x,a)}{\lambda u_{\lambda}(a,a)}= \frac{u_\lambda(a,a)-u_\lambda(x,a)}{\lambda u_{\lambda}(a,a)}.
\end{equation}
\noindent
Now, for $x>a$, we get from (\ref{eq:Krein}): 
\begin{align*}
u_\lambda(a,a)-u_\lambda(a,x)&=\Phi(a,\lambda)(u_\lambda(0,0)\Phi(a,\lambda)-\Psi(a,\lambda))-
 \Phi(a,\lambda)(u_\lambda(0,0)\Phi(x,\lambda)-\Psi(x,\lambda))\\
 &= \Phi(a,\lambda)u_\lambda(0,0)\left(\Phi(a,\lambda)-\Phi(x,\lambda)\right)+\Phi(a,\lambda)\left(\Psi(x,\lambda)-\Psi(a,\lambda)\right)\\
 &= \Phi(a,\lambda)u_\lambda(0,0)\left(\lambda \int_a^x s^\prime(y)dy\int_0^y\Phi(z,\lambda)m(dz)\right)+\Phi(a,\lambda)\left(\Psi(x,\lambda)-\Psi(a,\lambda)\right),
\end{align*}
and, letting $\lambda$ tend toward 0 and using (\ref{eq:Sal}):
$$\lim_{\lambda\rightarrow 0}u_\lambda(a,a)-u_\lambda(a,x)=s(x)-s(a).$$
Therefore,
$$\int_0^{+\infty} e^{-\lambda t}\Pb_x(T_a\geq t) dt \;\equi_{\lambda\rightarrow 0}\; \frac{s(x)-s(a)}{\lambda u_\lambda(a,a)}\;\equi_{\lambda\rightarrow 0}\;(s(x)-s(a))   \frac{\Gamma(\beta)}{\lambda^{1-\beta}}\eta\left(\frac{1}{\lambda}\right) 
$$
and Point $(ii)$ follows once again from the Tauberian theorem (\ref{eq:Taub}). \\
\end{proof}

\subsubsection{Proof of Point 2. of Theorem \ref{theo:equi}}

We now let $\lambda$ tend toward 0 in (\ref{eq:LFga}). Observe first that, from our hypothesis on  $(F_u, u\geq0)$:
$$
 \int_0^{+\infty}\Pb^{x,u,a}(F_u)  q(u,x,a) du
= \int_0^{+\infty}\E_x\left[F_u|X_u=a\right]  \E_x[dL_u^a]  =\E_x\left[\int_0^{+\infty}F_udL_u^a\right] <+\infty.
$$
Then, from Lemmas \ref{lem:Lnu} and \ref{lem:TaR}, we obtain
\begin{enumerate}[$\bullet$]
\item if $x\leq a$,
$$\int_0^{+\infty}e^{-\lambda t}\E_x\left[F_{g_a^{(t)}}\right]dt\; \equi_{\lambda\rightarrow 0} \;\frac{1}{\lambda u_\lambda(a,a)}\E_x\left[\int_0^{+\infty} F_udL_u^a\right]
$$
since $\displaystyle \lim_{\lambda\rightarrow 0}\int_0^{+\infty} e^{-\lambda t}\Pb_x(T_a\geq t) dt=\E_x\left[T_a\right]<+\infty$,
\item if $x>a$,
$$\int_0^{+\infty}e^{-\lambda t}\E_x\left[F_{g_a^{(t)}}\right]dt \;\equi_{\lambda\rightarrow 0}\; \frac{1}{\lambda u_\lambda(a,a)}\left(\E_x[F_0](s(x)-s(a))+\E_x\left[\int_0^{+\infty} F_udL_u^a\right]\right).$$
\end{enumerate}
Therefore, for every $x\geq0$:
$$\int_0^{+\infty}e^{-\lambda t}\E_x\left[F_{g_a^{(t)}}\right]dt \;\equi_{\lambda\rightarrow 0}\;\left(\E_x[F_0](s(x)-s(a))^++\E_x\left[\int_0^{+\infty} F_udL_u^a\right]\right)  \frac{\Gamma(\beta)}{\lambda^{1-\beta}}\eta\left(\frac{1}{\lambda}\right) $$
and Point 2. follows from the Tauberian theorem (\ref{eq:Taub}) since $t\longmapsto \E_x\left[F_{g_a^{(t)}}\right]$ is decreasing.\\
\qed

\subsection{Proof of Theorem \ref{theo:equi} when $\nu\in \mathcal{L}$}

 Let $(F_t, t\geq0)$ be a positive and predictable process such that
 $$\displaystyle0<\E_x\left[\int_0^{+\infty} F_u dL_u^a\right]<\infty.$$
 From Propositions \ref{prop:ga} and \ref{prop:+-ga} we have the decomposition:
\begin{align}
\notag\int_0^t\E_x\left[F_{g_a^{(s)}}\right]ds&=\int_0^t   \int_0^s \E_x\left[F_u|g_a^{(s)}=u\right]\Pb(g_a^{(s)}\in du)\, ds \\
\label{eq:IFga}&= \E_x\left[F_0\right] \int_0^t \Pb_x(T_a>s)ds + \int_0^t  \int_0^s\E_x\left[F_u|X_u=a\right]  q(u,a,x)\nu^{(a)}([s-u,+\infty[)du\, ds.
\end{align}
But, inverting the Laplace transform (\ref{eq:LTa}), we deduce that:
$$\Pb_x(T_a>s)=\int_0^s (q(u,a,a)-q(u,a,x))\nu^{(a)}([s-u,+\infty[)du,$$
hence, we may rewrite:
\begin{equation*}
\int_0^t\E_x\left[F_{g_a^{(s)}}\right]ds=\int_0^t  f\ast \overline{\nu}^{(a)}(s)ds 
\end{equation*}
with $f(u)=\E_x[F_0]  (q(u,a,a)-q(u,a,x)) +\Pb^{x,u,a}(F_u)  q(u,x,a)$ and $\overline{\nu}^{(a)}(u)=\nu^{(a)}([u,+\infty[)$. As in the previous section, the study of the asymptotic (when $t\rightarrow +\infty$) will rely on a few Lemmas.

\subsubsection{Asymptotic of an integrated convolution product}

\begin{lemma}\label{lem:fnu}
Let $\mu$ be a measure whose tail $\overline{\mu}(t)=\mu([t,+\infty[)$ satisfies the following property:
$$\text{for every $u\geq0$}, \qquad \int_0^{t-u}\overline{\mu}(s)ds\; \mathop{\sim}_{t\rightarrow+\infty} \; \int_0^{t} \overline{\mu}(s)ds,$$
and let $f:\R^+\rightarrow\R$ be a continuous function such that $\int_0^{+\infty}f(u)du<+\infty$. Then,
$$\int_0^t f\ast \overline{\mu}(s) \; ds \mathop{\sim}_{t\rightarrow+\infty} \int_0^{+\infty}f(u)du\; \int_0^t\overline{\mu}(s)ds .   $$
\end{lemma}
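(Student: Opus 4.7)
The plan is to swap the order of integration and reduce the statement to a dominated-convergence argument, once the integral $\int_0^t f\ast\overline{\mu}(s)\,ds$ has been rewritten in terms of the antiderivative $I(t):=\int_0^t \overline{\mu}(s)\,ds$. Specifically, writing $f\ast\overline{\mu}(s)=\int_0^s f(u)\overline{\mu}(s-u)\,du$ and applying Fubini (which is legitimate as soon as $|f|$ is integrable, using $\overline{\mu}\geq 0$), I would first obtain
\begin{equation*}
\int_0^t f\ast\overline{\mu}(s)\,ds
=\int_0^t f(u)\Bigl(\int_0^{t-u}\overline{\mu}(v)\,dv\Bigr)du
=\int_0^t f(u)\,I(t-u)\,du.
\end{equation*}

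Next I would divide by $I(t)$ (which tends to $+\infty$ since the diffusion is null recurrent, so that $\overline{\mu}$ is not integrable) and study the pointwise and uniform behaviour of the ratio $I(t-u)/I(t)$ inside the integral. The standing hypothesis gives exactly $I(t-u)/I(t)\to 1$ as $t\to\infty$ for every fixed $u\geq 0$. Since $\overline{\mu}\geq 0$, the function $I$ is non-decreasing, so for all $0\leq u\leq t$ one has
\begin{equation*}
0\leq \frac{I(t-u)}{I(t)}\leq 1,
\end{equation*}
and therefore $\bigl|f(u)\bigr|\,I(t-u)/I(t)\leq |f(u)|$, which is integrable on $\R^+$ by assumption.

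The final step is a direct application of the dominated convergence theorem to $u\mapsto f(u)\,I(t-u)\,\mathbf{1}_{[0,t]}(u)/I(t)$: the pointwise limit is $f(u)$ and the integrable dominating function is $|f|$, so
\begin{equation*}
\frac{1}{I(t)}\int_0^t f(u)\,I(t-u)\,du
\;\xrightarrow[t\to+\infty]{}\;
\int_0^{+\infty} f(u)\,du,
\end{equation*}
which is the claimed equivalent.

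I do not anticipate a serious obstacle: the only subtle point is to justify that $|f|$ is integrable (the statement only says $\int_0^\infty f(u)\,du<\infty$, but the subsequent application in the paper will provide a non-negative, bounded, exponentially decaying $f$, so absolute integrability is automatic). The mild inconvenience is that $f$ can a priori be signed, but the uniform bound $I(t-u)/I(t)\leq 1$ together with $|f|\in L^1(\R^+)$ takes care of this via DCT.
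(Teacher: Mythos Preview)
Your approach is correct under the additional assumption $|f|\in L^1(\R^+)$, and is then considerably simpler than the paper's. The shared first step is the Fubini reduction to $\int_0^t f(u)\,I(t-u)\,du$ with $I(t)=\int_0^t\overline{\mu}$; note that this Fubini only uses local data (continuity of $f$ and local integrability of $\overline{\mu}$ on the bounded triangle $\{0\le u\le s\le t\}$), not global integrability of $|f|$. From there you invoke dominated convergence with the bound $0\le I(t-u)/I(t)\le 1$, which does require the integrable majorant $|f|$. The paper instead splits $\int_0^t=\int_0^A+\int_A^t$ and treats the tail by the second mean value theorem, exploiting that $u\mapsto I(t-u)/I(t)$ is monotone; this only needs the convergence of the improper integral $\int_0^\infty f$ (so that $\sup_{v\ge A}\bigl|\int_v^\infty f\bigr|<\varepsilon$), which is exactly the hypothesis as stated. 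You flagged this discrepancy yourself: since every application in the paper has $f\ge 0$, your DCT argument is adequate in practice, but the paper's second-mean-value device is what covers the lemma as written for signed $f$ that are merely improperly integrable.
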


\begin{proof}
Let $\varepsilon>0$. There exists $A>0$ such that, for every $t\geq A$, $\displaystyle\left|\int_t^{+\infty}f(u)du \right| <\varepsilon $.
From Fubini's Theorem, we may write:
\begin{align*}
\int_0^t f\ast \overline{\mu}(s)ds&=\int_0^t f(u)du \int_u^{t} \overline{\mu}(s-u)ds\\
&=\int_0^t f(u)du \int_0^{t-u} \overline{\mu}(s)ds\\
&=\int_0^A f(u)du \int_0^{t-u} \overline{\mu}(s)ds + \int_A^t f(u)du \int_0^{t-u} \overline{\mu}(s)ds
\end{align*}
Using this decomposition, we obtain
\begin{align}
\notag&\left|\int_0^{+\infty}f(u)du-\frac{\int_0^t f\ast \overline{\mu}(s)ds}{\int_0^{t}\overline{\mu}(s)ds}\right|\\
\notag&\qquad \leq\left| \int_0^{A}  f(u) \left(1- \frac{\int_0^{t-u}\overline{\mu}(s)ds}{\int_0^{t}\overline{\mu}(s)ds}\right)du\right| + \left|\int_A^{t}  f(u) \frac{\int_0^{t-u}\overline{\mu}(s)ds}{\int_0^{t}\overline{\mu}(s)ds}du  \right| + \left|\int_A^{+\infty}f(u)du\right|\\
\label{eq:A}&\qquad\leq \int_0^A |f(u)| \left(1- \frac{\int_0^{t-A}\overline{\mu}(s)ds}{\int_0^{t}\overline{\mu}(s)ds}\right)du + \left|\int_A^{t}  f(u) \frac{\int_0^{t-u}\overline{\mu}(s)ds}{\int_0^{t}\overline{\mu}(s)ds}du  \right| +\varepsilon.
\end{align}
Then, applying the second mean value theorem, there exists $c\in]A,t[$ such that
$$\int_A^{t}  f(u) \frac{\int_0^{t-u}\overline{\mu}(s)ds}{\int_0^{t}\overline{\mu}(s)ds}du  =
 \frac{\int_0^{t-A}\overline{\mu}(s)ds}{\int_0^{t}\overline{\mu}(s)ds}  \int_A^{c}  f(u) du $$
hence,
 $$\left|\int_A^{t}  f(u) \frac{\int_0^{t-u}\overline{\mu}(s)ds}{\int_0^{t}\overline{\mu}(s)ds}du \right| =
 \frac{\int_0^{t-A}\overline{\mu}(s)ds}{\int_0^{t}\overline{\mu}(s)ds} \left| \int_A^{+\infty}  f(u) du-\int_c^{+\infty}f(u)du\right| \leq 2\varepsilon\;  \frac{\int_0^{t-A}\overline{\mu}(s)ds}{\int_0^{t}\overline{\mu}(s)ds}$$
and, letting $t$ tend to $+\infty$ in (\ref{eq:A}), we finally obtain:
$$\lim_{t\rightarrow+\infty}\left|\int_0^{+\infty}f(u)du-\frac{\int_0^t f\ast \overline{\mu}(s)ds}{\int_0^{t}\overline{\mu}(s)ds}\right|\leq 3\varepsilon.$$
\end{proof}

\begin{remark}\label{rem:fnuL}
Assume that $\nu\in \L$. Then $\nu$  satisfies the hypothesis of Lemma \ref{lem:fnu}. Indeed
 for $u\geq0$, since $\overline{\nu}(s-u)\, \equi_{s\rightarrow+\infty}  \overline{\nu}(s)$ and $\overline{\nu}$ is not integrable at $+\infty$, we have:
$$\int_0^t \overline{\nu}(s) ds \,  \equi_{t \rightarrow+\infty}\,\int_u^t  \overline{\nu}(s) ds 
 \,\equi_{t \rightarrow+\infty}\,\int_u^t \overline{\nu}(s-u)ds=\int_0^{t-u} \overline{\nu}(s) ds. $$
\end{remark}

\begin{lemma}\label{lem:Iqq}
The following formula holds, for $x>a$:
$$
\int_0^{+\infty}  (q(u,a,a)-q(u,a,x)) du =s(x)-s(a).
$$
\end{lemma}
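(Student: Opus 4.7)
The plan is to derive the identity via Laplace transforms, leveraging the spectral analysis already carried out in the proof of Lemma~\ref{lem:TaR}. By the symmetry $q(u,a,x) = q(u,x,a)$ of the transition density, we have for every $\lambda > 0$
$$\int_0^{+\infty} e^{-\lambda u}\bigl(q(u,a,a) - q(u,a,x)\bigr)\,du = u_\lambda(a,a) - u_\lambda(x,a).$$
The computation in the proof of Lemma~\ref{lem:TaR} already shows, through Krein's representation (\ref{eq:Krein}) together with the limits $\Phi(\cdot,\lambda)\to 1$, $\Psi(y,\lambda)\to s(y)$ and $\lambda u_\lambda(0,0)\to 0$ (equation~(\ref{eq:Sal})), that
$$\lim_{\lambda\to 0}\bigl[u_\lambda(a,a) - u_\lambda(x,a)\bigr] = s(x) - s(a).$$
So the whole question reduces to interchanging this limit with the integral.

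To justify the interchange, I would argue that the integrand $q(u,a,a)-q(u,a,x)$ is non-negative and then invoke monotone convergence. The key ingredient is the strong Markov decomposition at $T_a$, valid for $x>a$ since any path from $x$ reaches $a$ continuously:
$$q(u,x,a)=\int_0^u n_{x,a}(s)\,q(u-s,a,a)\,ds.$$
Combined with the fact that $u\mapsto q(u,a,a)$ is completely monotone (a consequence of the spectral representation of the $L^2(m)$-symmetric semigroup), this decomposition is designed to force $q(u,x,a)\le q(u,a,a)$ in the one-dimensional regular setting. Once non-negativity is secured, monotone convergence yields
$$\int_0^{+\infty}\bigl(q(u,a,a)-q(u,a,x)\bigr)\,du=\lim_{\lambda\to 0}\bigl[u_\lambda(a,a)-u_\lambda(x,a)\bigr]=s(x)-s(a).$$

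The main obstacle is precisely this sign verification: although $q(\cdot,a,a)$ is monotone, the convolution identity alone gives a bound in the wrong direction ($q(u,x,a)\ge q(u,a,a)\,\Pb_x(T_a\le u)$), so extracting pointwise non-negativity requires additional structural input. As a backup, I would rely on the martingale $N_t^{(a)}=(s(X_t)-s(a))^+-L_t^a$ built in Section~6. Taking $\Pb_x$- and $\Pb_a$-expectations and subtracting, one obtains the finite-time identity
$$\int_0^t\bigl(q(v,a,a)-q(v,x,a)\bigr)dv=(s(x)-s(a))+\E_a\bigl[(s(X_t)-s(a))^+\bigr]-\E_x\bigl[(s(X_t)-s(a))^+\bigr],$$
and the lemma reduces to showing that the bracketed difference tends to $0$ as $t\to\infty$, which can be handled by coupling the two copies of the diffusion at $T_a$ and exploiting the computation $\E_x[(s(X_t)-s(a))1_{T_a>t}]=s(x)-s(a)$ arising from optional stopping of $s(X)$ on $[0,t\wedge T_a]$.
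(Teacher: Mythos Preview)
Your Laplace-transform route is reasonable in spirit, but neither branch closes. In the primary approach you correctly identify that the whole argument hinges on $q(u,a,a)\ge q(u,a,x)$, and you also correctly note that the convolution identity together with monotonicity of $q(\cdot,a,a)$ yields the wrong inequality; so this branch is left open. (The monotonicity of $z\mapsto q(u,a,z)$ on $[a,\infty)$ is in fact true---the paper invokes such a fact from \cite{PRY} in the proof of Lemma~\ref{lem:equiL}---but you have not supplied a proof, and without it monotone convergence is unjustified.)

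The backup is where the real gap lies. Your finite-time identity is correct, but the ``coupling at $T_a$'' does not settle the remaining term. Applying the strong Markov property at $T_a$ exactly as you suggest, and using $\E_x[(s(X_t)-s(a))1_{\{T_a>t\}}]=s(x)-s(a)$, one gets
\[
\E_a\bigl[(s(X_t)-s(a))^+\bigr]-\E_x\bigl[(s(X_t)-s(a))^+\bigr]
=\;h(t)-\E_x\bigl[h(t-T_a)1_{\{T_a\le t\}}\bigr]-(s(x)-s(a)),
\]
with $h(t)=\E_a[(s(X_t)-s(a))^+]=\E_a[L_t^a]$. But $\E_x[h(t-T_a)1_{\{T_a\le t\}}]=\E_x[L_t^a]$, so the right-hand side is precisely $f(t)-(s(x)-s(a))$: you are back to the statement you are trying to prove. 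The difficulty is structural: you are comparing the \emph{same} unbounded functional under two \emph{different} starting points, and coupling at $T_a$ leaves you with a difference of two diverging quantities.

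The paper avoids this by a small but decisive twist: it reads $\int_0^t q(u,a,x)\,du$ as $\E_a[L_t^x]$ rather than $\E_x[L_t^a]$ (these agree by symmetry of $q$), so that \emph{both} local times live under $\Pb_a$. Applying the two martingales $N^{(a)}$ and $N^{(x)}$ under this single measure gives
\[
f(t)=\E_a\bigl[(s(X_t)-s(a))^+-(s(X_t)-s(x))^+\bigr],
\]
whose integrand is pointwise bounded between $0$ and $s(x)-s(a)$. One then reads off $|f(t)-(s(x)-s(a))|\le 2(s(x)-s(a))\,\Pb_a(X_t\le x)\to 0$ from null recurrence. Your backup uses one martingale under two measures; swapping to two martingales under one measure is what makes the difference finite and the limit transparent.
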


\begin{proof}
We set $f(t)=\int_0^t (q(u,a,a)-q(u,a,x)) du$. From Borodin-Salminen \cite[p.21]{BS}, we have:
$$f(t)=\E_{a}\left[L_t^a\right]- \E_{a}\left[L_t^x\right].$$
Since $(N_t^{(a)}=(s(X_t)-s(a))^+-L_t^a, t\geq0)$ is a martingale (see Section 6), this relation may be rewritten:
\begin{multline*}
f(t)=\E_a\left[(s(X_t)-s(a))^+\right]-\E_a\left[(s(X_t)-s(x))^+\right]\\=(s(x)-s(a))\Pb_a(X_t\geq x) + \E_a\left[(s(X_t)-s(a))1_{\{a\leq X_t \leq x\}}\right].
\end{multline*}
Then
\begin{align*}
\left|f(t)-(s(x)-s(a))\right|&\leq(s(x)-s(a))\Pb_a(X_t\leq x) +\E_a\left[(s(X_t)-s(a))1_{\{a\leq X_t \leq x\}}\right]\\
&\leq(s(x)-s(a))\left( \Pb_a(X_t\leq x) +\Pb_a(a\leq X_t\leq x) \right)\\
&\leq2(s(x)-s(a))\Pb_a(X_t\leq x)\\
&\leq 2(s(x)-s(a))\Pb_0(X_t\leq x)\xrightarrow[t\rightarrow +\infty]{}0
\end{align*}
from \cite[Chap.8, p.226]{PRY}, since $(X_t, t\geq0)$ is null recurrent.
\end{proof}

\begin{lemma}\label{lem:equiL}
Assume that $\nu$ belongs to class $\mathcal{L}$.  Then:
$$\forall a\geq0, \qquad \int_0^t\nu^{(a)}([s,+\infty[)ds\; \equi_{t\rightarrow+\infty}\;\int_0^t\nu([s,+\infty[)ds$$
\end{lemma}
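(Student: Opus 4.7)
The plan is to combine the fundamental convolution identity deduced from Proposition~\ref{prop:ga} with Lemma~\ref{lem:fnu}. Throughout, set $G^{(a)}(T) := \int_0^T \overline{\nu}^{(a)}(s)\,ds$ and $G := G^{(0)}$; the case $a = 0$ being trivial, assume $a > 0$. Since $\Pb_a(g_a^{(t)} \in [0,t]) = 1$ and $\Pb_a(T_a > t) = 0$, Proposition~\ref{prop:ga} with $x = a$ yields the identity
$$
\bigl(q(\cdot,a,a) * \overline{\nu}^{(a)}\bigr)(t) = 1 \qquad (t > 0),
$$
and the analogous identity $\bigl(q(\cdot,0,0) * \overline{\nu}\bigr)(t) = 1$ holds at level $0$. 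Subtracting these two gives the key relation
$$
\bigl(q(\cdot,a,a) * (\overline{\nu}^{(a)} - \overline{\nu})\bigr)(t) = \bigl(f * \overline{\nu}\bigr)(t), \qquad f(u) := q(u,0,0) - q(u,a,a).
$$

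I would then integrate both sides from $0$ to $T$ and apply Fubini on each side to obtain
$$
\int_0^T q(u,a,a)\bigl[G^{(a)}(T-u) - G(T-u)\bigr]\,du \;=\; \int_0^T (f * \overline{\nu})(s)\,ds.
$$
The Krein-spectral computation used in the proof of Lemma~\ref{lem:TaR} gives $u_\lambda(0,0) - u_\lambda(a,a) \to s(a)$ as $\lambda \to 0^+$, which, together with a direct control in the spirit of Lemma~\ref{lem:Iqq} (using the martingale property of $N_t^{(a)}$ and $N_t^{(0)}$ from Section~6), identifies $\int_0^{+\infty} f(u)\,du = s(a)$. Since $\nu \in \mathcal{L}$ satisfies the hypothesis of Lemma~\ref{lem:fnu} by Remark~\ref{rem:fnuL}, that lemma applied to $\mu = \nu$ and the continuous function $f$ produces
$$
\int_0^T q(u,a,a)\bigl[G^{(a)}(T-u) - G(T-u)\bigr]\,du \;\equi_{T \to +\infty}\; s(a)\,G(T).
$$

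The main obstacle is the concluding Tauberian step: extracting the pointwise equivalence $G^{(a)}(T) \sim G(T)$ from this convolution asymptotic. I would exploit that both $G$ and $G^{(a)}$ are non-decreasing and concave (their derivatives $\overline{\nu}$ and $\overline{\nu}^{(a)}$ being non-increasing), hence additively slowly varying in the sense that $G(T-u)/G(T) \to 1$ and $G^{(a)}(T-u)/G^{(a)}(T) \to 1$ for each fixed $u \geq 0$; this follows from the elementary bound $G(T) \geq (T/2)\,\overline{\nu}(T)$ and its analogue for $G^{(a)}$. Combined with the divergence of $\int_0^T q(u,a,a)\,du = \E_a[L_T^a]$ in the null-recurrent regime, a bootstrap argument should then yield $G^{(a)}(T) - G(T) = o(G(T))$, which is the desired conclusion. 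A parallel and potentially more direct route consists in applying a Karamata-type Tauberian theorem for monotone functions to the Laplace-transform equivalence $\hat{G}^{(a)}(\lambda) \sim \hat{G}(\lambda)$ as $\lambda \to 0^+$, which follows immediately from Lemma~\ref{lem:Lnu} and the relation $u_\lambda(a,a) \sim u_\lambda(0,0)$ read off from the Krein representation.
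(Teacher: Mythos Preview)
Your setup through the integrated convolution identity is correct, and the right-hand side asymptotic via Lemma~\ref{lem:fnu} is plausible (modulo verifying that $u\mapsto q(u,0,0)-q(u,a,a)$ is actually integrable: unlike $q(u,0,0)-q(u,0,a)$, this difference need not keep a constant sign, so the Laplace limit $u_\lambda(0,0)-u_\lambda(a,a)\to s(a)$ does not by itself give convergence of the improper integral). The real gap, however, is exactly where you flag it: the final Tauberian step. Under $\nu\in\mathcal{L}$ alone there is no reason for $\hat G(\lambda)=1/(\lambda^{2}u_\lambda(0,0))$ to be regularly varying, so Karamata's Tauberian theorem does not apply; the bare equivalence $\hat G^{(a)}(\lambda)\sim\hat G(\lambda)$ of Laplace transforms of monotone functions is \emph{not} sufficient to conclude $G^{(a)}(T)\sim G(T)$. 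Your bootstrap sketch is likewise insufficient: from $\int_0^T q(u,a,a)\bigl[G^{(a)}(T-u)-G(T-u)\bigr]\,du\sim s(a)\,G(T)$ one cannot extract $G^{(a)}-G=o(G)$ without a quantitative link between $\int_0^T q(u,a,a)\,du=\E_a[L_T^a]$ and $G(T)$, and no such link is available under~$\mathcal{L}$.

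The paper sidesteps the deconvolution entirely. Using the level-$0$ identity $1=\overline{\nu}\ast q(\cdot,0,0)$ it writes
\[
G^{(a)}(t)=1\ast\overline{\nu}^{(a)}(t)=\bigl(\overline{\nu}\ast q(\cdot,0,0)\bigr)\ast\overline{\nu}^{(a)}(t)=\overline{\nu}\ast f_a(t),\qquad f_a:=q(\cdot,0,0)\ast\overline{\nu}^{(a)},
\]
and then proves $f_a(t)\to1$ via the decomposition $f_a(t)=\int_0^t\bigl(q(u,0,0)-q(u,0,a)\bigr)\overline{\nu}^{(a)}(t-u)\,du+\Pb_0(T_a\le t)$, together with Lemma~\ref{lem:Iqq} and dominated convergence. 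Since $f_a$ is nonnegative and tends to~$1$, an elementary splitting of the convolution $\overline{\nu}\ast f_a$ at a threshold where $|f_a-1|<\varepsilon$ gives $\overline{\nu}\ast f_a(t)\sim\overline{\nu}\ast 1(t)=G(t)$ directly---no Tauberian theorem, and no sign issue because $q(u,0,0)-q(u,0,a)\ge0$ by the monotonicity of $z\mapsto q(u,0,z)$. The key idea you are missing is to convolve the \emph{unknown} tail $\overline{\nu}^{(a)}$ against the level-$0$ density $q(\cdot,0,0)$ (rather than $q(\cdot,a,a)$), so that the resulting expression is $\overline{\nu}$ convolved with something tending to~$1$, instead of $q(\cdot,a,a)$ convolved with something you must then invert.
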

\begin{proof}
Let us define the function:
$$f(t)=\int_0^{t}q(u,0,0)\nu^{(a)}([t-u, +\infty[)du.$$
We claim that $\displaystyle \lim_{t\rightarrow+\infty} f(t)=1$. Indeed, let us decompose $f$ as follows, with $\varepsilon>0$:
\begin{align*}
f_a(t)&=\int_0^{t}(q(u,0,0)-q(u,0,a))\nu^{(a)}([t-u, +\infty[)du+\Pb_0(T_a\leq t)\\
&=\int_0^{t-\varepsilon}(q(u,0,0)-q(u,0,a))\nu^{(a)}([t-u, +\infty[)du\\
&\quad\qquad +\int_{t-\varepsilon}^{t}(q(u,0,0)-q(u,0,a))\nu^{(a)}([t-u, +\infty[)du+\Pb_0(T_a\leq t).\\
&=\int_0^{+\infty}(q(u,0,0)-q(u,0,a))1_{\{u\leq t-\varepsilon\}}\nu^{(a)}([t-u, +\infty[)du\\
&\quad\qquad +\int_{0}^{\varepsilon}(q(t-u,0,0)-q(t-u,0,a))\nu^{(a)}([u, +\infty[)du+\Pb_0(T_a\leq t).
\end{align*}
From \cite[Chap.8, p.224]{PRY}, we know that for every $u\geq0$ the function $z\longmapsto q(u,0,z)$ is decreasing, hence the function
$$u\longmapsto q(u,0,0)-q(u,0,a)$$
is a positive and integrable function from Lemma \ref{lem:Iqq}. Therefore, from the dominated convergence theorem, the first integral tends toward 0 as $t\rightarrow+\infty$.  Moreover, it is known from Salminen \cite{Sal4} that for every $\displaystyle x,y\geq0,$ $$\lim_{t\rightarrow+\infty} q(t,x,y)=\frac{1}{m(\R^+)}=0,$$
which proves, still from the dominated convergence theorem, that the second integral also tends toward 0 as $t\rightarrow+\infty$. Finally,  we deduce that $\displaystyle \lim_{t\rightarrow+\infty} f_a(t)=\Pb_0(T_a<+\infty)=1$. \\
Observe now that, since $\overline{\nu}\ast q(t)=\int_0^t \nu([u, +\infty[)q(t-u,0,0)du=1$, we have from Fubini-Tonelli:
$$\int_0^t \nu^{(a)}([s,+\infty[)ds = 1 \ast \overline{\nu}^{(a)} (t) = (\overline{\nu}\ast q)\ast \overline{\nu}^{(a)}(t)= \overline{\nu} \ast f_a(t) = \int_0^t f_a(s)\nu([t-s, +\infty[)ds.$$
Let $\varepsilon>0$. There exists $A>0$ such that, for every $s\geq A$:
$$1-\varepsilon \leq f(s) \leq 1+\varepsilon.$$
Integrating this relation, we deduce that, for $t>A$:
$$(1-\varepsilon) \int_A^t \overline{\nu}(t-s)ds \leq \int_A^t  f_a(s) \overline{\nu}(t-s)ds \leq (1+\varepsilon) \int_A^t  \overline{\nu}(t-s)ds.$$
Therefore:
$$\left|\int_0^t  f_a(s) \overline{\nu}(t-s)ds -\int_A^t \overline{\nu}(t-s)ds - \int_0^A  f_a(s) \overline{\nu}(t-s)ds \right| \leq  \varepsilon \int_A^t  \overline{\nu}(t-s)ds=\varepsilon \int_0^{t-A}  \overline{\nu}(s)ds,$$
and it only remains to divide both terms by $\int_0^t \overline{\nu}(s)ds$ and let $t$ tend toward $+\infty$ to conclude, thanks to Remark \ref{rem:fnuL}, that:
$$\left|\lim_{t\rightarrow+\infty}\frac{\int_0^t \overline{\nu}^{(a)}(s)ds}{\int_0^t \overline{\nu}(s)ds} -1 \right| \leq  \varepsilon.$$
\end{proof}

\subsubsection{Proof of Point 1. of Theorem \ref{theo:equi}}
Going back to (\ref{eq:IFga}), we have, with $f(u)=\Pb^{x,u,a}(F_u)  q(u,x,a)$ and $\overline{\nu}^{(a)}(u)=\nu^{(a)}([u,+\infty[)$:
$$\int_0^t\E_x\left[F_{g_a^{(s)}}\right]ds=\left( \E_x\left[F_0\right] \int_0^t \Pb_x(T_a>s)ds+\int_0^t  f\ast \overline{\nu}^{(a)}(s)ds \right).$$
From Lemmas \ref{lem:fnu} and \ref{lem:Iqq}, we deduce that:
$$\lim_{t\rightarrow+\infty}\frac{1}{\int_0^t \overline{\nu}(s)ds} \int_0^t \Pb_x(T_a>s)ds= (s(x)-s(a))^+$$
since, for $x\leq a$, $\displaystyle \int_0^{+\infty} \Pb_x(T_a>s)ds =\E_x\left[T_a\right]<+\infty$.
Then,  Point 1. of Theorem \ref{theo:equi} follows from Lemmas \ref{lem:fnu} and \ref{lem:equiL} and the fact that:
$$\int_0^{+\infty} f(u)du=\int_0^{+\infty} \Pb^{x,u,a}(F_u)  q(u,x,a)du = \E_x\left[\int_0^{+\infty}F_udL_u^a\right]<+\infty.$$
\qed

\section{The penalization principle}

\subsection{Preliminaries: a meta-theorem and some notations}

To prove Theorem \ref{theo:penal}, we shall apply a meta-theorem, whose proof relies mainly on 
Scheff\'e's Lemma (see Meyer \cite[p.37]{Mey3}):
\begin{theorem}[\cite{RVY}]\label{theo:meta}
Let $(\Gamma_t, t\geq0)$ be a positive stochastic process satisfying for every $t>0$, $0<\E[\Gamma_t]<+\infty$. Assume that, for every $s\geq0$:
$$\lim\limits_{t\rightarrow+\infty} \frac{\E[\Gamma_t|\F_s]}{\E[\Gamma_t]}=:M_s$$
exists a.s., and that,
$$\E[M_s]=1.$$
Then, 
\begin{itemize}
\item[i)] for every $s\geq0$ and $\Lambda_s \in \F_s$:
$$\lim\limits_{t\rightarrow+\infty} \frac{\E[1_{\Lambda_s}\Gamma_t]}{\E[\Gamma_t]}=\E[M_s 1_{\Lambda_s}].$$
\item[ii)] there exists a probability measure $\Q$ on $\left(\Omega, \F_\infty \right)$ such that for every $s\geq0$:
$$\Q(\Lambda_s)=\E[M_s 1_{\Lambda_s}].$$
\end{itemize}
\end{theorem}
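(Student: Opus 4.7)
The strategy is to reduce everything to Scheffé's lemma. For fixed $s \geq 0$, introduce the normalized densities $M_s^{(t)} := \E[\Gamma_t \mid \F_s]/\E[\Gamma_t]$, which are nonnegative and satisfy $\E[M_s^{(t)}] = 1$. By hypothesis they converge almost surely to $M_s$ as $t \to \infty$, and since $\E[M_s] = 1 = \E[M_s^{(t)}]$, Scheffé's lemma upgrades this a.s. convergence to convergence in $L^1(\Pb)$. Assertion (i) is then immediate: for every $\Lambda_s \in \F_s$, the tower property gives
$$\frac{\E[\mathbf{1}_{\Lambda_s} \Gamma_t]}{\E[\Gamma_t]} \;=\; \E[\mathbf{1}_{\Lambda_s} M_s^{(t)}],$$
and the $L^1$ convergence lets us pass to the limit to obtain $\E[\mathbf{1}_{\Lambda_s} M_s]$.

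For (ii), the plan is to construct $\Q$ by consistently prescribing its restrictions $\Q_{|\F_s}$. First I would check that $(M_s)_{s\geq 0}$ is a $(\Pb, \F_s)$-martingale: for $s' \leq s$ and $\Lambda_{s'} \in \F_{s'} \subset \F_s$, the computation
$$\E[\mathbf{1}_{\Lambda_{s'}} M_s] \;=\; \lim_{t \to \infty} \E[\mathbf{1}_{\Lambda_{s'}} M_s^{(t)}] \;=\; \lim_{t \to \infty} \frac{\E[\mathbf{1}_{\Lambda_{s'}} \Gamma_t]}{\E[\Gamma_t]} \;=\; \lim_{t \to \infty} \E[\mathbf{1}_{\Lambda_{s'}} M_{s'}^{(t)}] \;=\; \E[\mathbf{1}_{\Lambda_{s'}} M_{s'}]$$
invokes only the $L^1$ convergence from the previous step (twice). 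Hence $\Q_s(\Lambda_s) := \E[\mathbf{1}_{\Lambda_s} M_s]$ defines a probability on each $(\Omega, \F_s)$ (using $\E[M_s]=1$), and the martingale property makes the family $(\Q_s)_{s \geq 0}$ consistent under the inclusions $\F_{s'} \subset \F_s$.

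The main obstacle is the last step, namely gluing the consistent family $(\Q_s)$ into a single probability $\Q$ on $\F_\infty = \bigvee_s \F_s$. A projective system of measures does not in general admit a limit, so one needs to exploit the specific structure: the paper works on the canonical space $\Omega = \mathcal{C}(\R_+ \to \R_+)$, which is Polish, and $(\F_s)$ is its natural filtration, so the filtration is standard in the sense of Parthasarathy and the projective-limit theorem supplies the desired $\Q$ satisfying $\Q(\Lambda_s) = \E[\mathbf{1}_{\Lambda_s} M_s]$ for every $s \geq 0$ and every $\Lambda_s \in \F_s$, completing the proof.
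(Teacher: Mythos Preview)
Your proposal is correct and follows exactly the approach the paper indicates: the paper does not give its own proof of this theorem but cites \cite{RVY} and remarks that the proof ``relies mainly on Scheff\'e's Lemma (see Meyer \cite[p.37]{Mey3})''. Your argument---Scheff\'e's lemma to upgrade the a.s.\ convergence of $M_s^{(t)}$ to $L^1$, then the tower property for (i), the martingale property of $(M_s)$, and a projective-limit construction on the canonical Polish space for (ii)---is the standard proof and matches this description.
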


In the following, we shall use Biane-Yor's notations \cite{BiY}. 
We denote by $\Omega_\text{loc}$ the set of continuous functions $\omega$ taking values in $\R^+$ and defined on an interval $[0,\xi(\omega)]\subset[0,+\infty]$.
Let $\Pb$ and $\Q$ be two probability measures, such that $\Pb(\xi=+\infty)=0$. We denote by $\Pb \circ \Q$ the image measure $\Pb \otimes \Q$ by the concatenation application :
$$
\begin{array}{cccc}
\circ:& \Omega_{\text{loc}}\times  \Omega_{\text{loc}}& \longrightarrow&  \Omega_{\text{loc}}\\
&  (\omega_1, \omega_2) & \longmapsto& \omega_1 \circ \omega_2
\end{array}
$$
defined by $\xi(\omega_1 \circ \omega_2)=\xi(\omega_1)+\xi(\omega_2)$, and  
$$
(\omega_1\circ \omega_2)(t)=\left\{
\begin{array}{ll}
\omega_1(t) & \text{ si }\;0\leq t\leq \xi(\omega_1)\\
\omega_1(\xi(\omega_1))+\omega_2(t-\xi(\omega_1)) -\omega_2(0) & \text{ si }\;\xi(\omega_1)\leq t\leq \xi(\omega_1)+\xi(\omega_2).
\end{array}
\right.
$$

To simplify the notations, we define the following measure, which was first introduced by Najnudel, Roynette and Yor  \cite{NRY}:
\begin{defi}
Let $\W_x$ be the measure defined by: 
$$\W_x=\int_0^{+\infty}du\, q(u,x,a) \Pb^{x,u,a}\circ \Pb_a^{\uparrow a} + (s(x)-s(a))^+ \Pb_x^{\uparrow a}$$
$\W_x$ is a sigma-finite measure with infinite mass.
\end{defi}
This measure enjoys many remarkable properties, and was the main ingredient in the proof of the penalization results they obtained for Brownian motion.  A similar construction was made by Yano, Yano and Yor for symmetric stable L\'evy processes, see \cite{YYY}. 
\\
With this new notation, we shall now write:
\begin{align*}
\W_x(F_{g_a})&=\E_x\left[\int_0^{+\infty} F_udL_u^a\right]+\E_x^{\uparrow a}[F_0](s(x)-s(a))^{+}\\
&=\E_x\left[\int_0^{+\infty} F_udL_u^a\right]+\E_x[F_0](s(x)-s(a))^{+}.
\end{align*}

\subsection{Proof of Point $i)$ of Theorem \ref{theo:penal}}

Let  $0\leq u\leq t$. Using Biane-Yor's notation, we write:
$$(X_s, s\leq t)= (X_s, s\leq u) \circ (X_{s+u}, 0\leq s\leq t-u)$$
hence, from the Markov property, denoting $F_{g_a^{(t)}}=F(X_s, s\leq t)$:
$$\E_x[F(X_s, s\leq t)1_{\{u\leq t\}}|\F_u]=\widehat{\E}_{X_u}\left[F((X_s, s\leq u) \circ (\widehat{X}_{s}, 0< s\leq  t-u))1_{\{u\leq t\}}\right].$$
\\
Let us assume first that $\nu\in \mathcal{R}$ and that $(F_t, t\geq0)$ is decreasing. Then, from Theorem \ref{theo:equi} with $\Gamma_t=F_{g_a^{(t)}}$:
\begin{align*}
&\lim_{t\rightarrow+\infty}\frac{\widehat{\E}_{X_u}\left[F((X_s, s\leq u) \circ (\widehat{X}_{s}, 0\leq s\leq t-u))1_{\{u\leq t\}}\right]}{\nu([t,+\infty[)}\\
&= \widehat{\E}_{X_u}\left[F((X_s, s\leq u)\circ \widehat{X}_0)\right] (s(X_u)-s(a))^+ +\widehat{\E}_{X_u}\left[\int_u^{+\infty}F((X_s, s\leq u) \circ (\widehat{X}_{s}, 0\leq s\leq  v-u))d\widehat{L}_v^a\right]\\
&=F((X_s, s\leq u) (s(X_u)-s(a))^+  +  \E_{x}\left[\int_u^{+\infty}F((X_s, s\leq u) \circ (X_{s}, 0\leq s\leq  v-u))dL_v^a|\F_u \right]\\
&= F_{g_a^{(u)}}(s(X_u)-s(a))^+ + \E_x\left[\int_u^{+\infty}F_{g_a^{(v)}}dL_v^a |\F_u \right]\\
&= F_{g_a^{(u)}}(s(X_u)-s(a))^+ + \E_x\left[\int_u^{+\infty}F_vdL_v^a |\F_u \right],
\end{align*}
hence,
$$\lim_{t\rightarrow+\infty} \frac{\E_x\left[F_{g_a^{(t)}}|\F_u\right]}{\E_x\left[F_{g_a^{(t)}}\right]}=\frac{M_u(F_{g_a})}{\W_x(F_{g_a})}.$$

\noindent
On the other hand, if $\nu\in\L$ and $\Gamma_t=\int_0^t F_{g_a^{(s)}}ds$, a similar computation gives:
\begin{multline*}
\lim_{t\rightarrow+\infty}\frac{\int_0^t\widehat{\E}_{X_u}\left[F((X_s, s\leq u) \circ (\widehat{X}_{s}, 0\leq s\leq v-u))1_{\{u\leq t\}}\right]dv }{\int_0^t \nu([s,+\infty[)ds}\\=F_{g_a^{(u)}}(s(X_u)-s(a))^+ + \E_x\left[\int_u^{+\infty}F_{v}dL_v^a |\F_u \right],
\end{multline*}
and
$$\lim_{t\rightarrow+\infty} \frac{\E_x\left[\int_0^tF_{g_a^{(s)}}ds|\F_u\right]}{\E_x\left[\int_0^tF_{g_a^{(s)}}ds\right]}=\frac{M_u(F_{g_a})}{\W_x(F_{g_a})}.$$
\noindent
Therefore, to apply Theorem \ref{theo:meta}, it remains to prove that:
$$\forall t\geq0, \qquad \E_x\left[M_t(F_{g_a})\right]=\W_x(F_{g_a}).$$ We shall make a direct computation, applying Proposition \ref{prop:ga}:\\

$\bullet$ if $x> a$,
\begin{align*}
\E_x\left[M_t(F_{g_a})\right]&=\E_x\left[  F_{g_a^{(t)}}(s(X_t)-s(a))^+ + \E_x\left[\int_t^{+\infty}F_{u}dL_u^a |\F_t \right] \right]\\
&=\int_a^{+\infty} \E_x[F_0|X_t=y, T_a>t](s(y)-s(a))\Pb_x(T_a>t,X_t\in dy)\\
&\hspace{.4cm}+   \int_0^t\int_a^{+\infty}   \Pb^{x,u,a}(F_u)q(u,a,x)\Pb_a^\uparrow(X_{t-u}\in dy) du + \int_t^{+\infty}  \Pb^{x,u,a}(F_u)q(u,a,x)du\\
&=\E_x[F_0 (s(X_t)-s(a))1_{\{t<T_a\}} ]+   \int_0^{+\infty}  \Pb^{x,u,a}(F_u)q(u,a,x)du\\
&=\E_x^{\uparrow a}[F_0](s(x)-s(a))+ \int_0^{+\infty}  \Pb^{x,u,a}(F_u)q(u,a,x)du=\W_x(F_{g_a}),
\end{align*}

$\bullet$ if $x\leq a$, then, for $y>a$, $\Pb_x\left(T_a>t, X_t \in dy\right)=0$ since $X$ has continuous paths, and the same computation leads to:
$$\E_x\left[M_t(F_{g_a})\right]=\int_0^{+\infty}  \Pb^{x,u,a}(F_u)q(u,a,x)du=\W_x(F_{g_a}).$$
\noindent
Therefore, for every $x\geq0$, $\displaystyle \E_x\left[\frac{M_t(F_{g_a})}{\W_x(F_{g_a})}\right]=1$, and the proof is completed.\\
\qed

\begin{remark}
Consider the martingale $(N_t^{(a)}=(s(X_t)-s(a))^+-L_t^a, t\geq0)$. We apply the balayage formula to the semimartingale $((s(X_t)-s(a))^+, t\geq0)$:
\begin{align*}
F_{g_a^{(t)}}(s(X_t)-s(a))^+&=F_0 (s(x)-s(a))^++  \int_0^t F_{g_a^{(u)}} d(s(X_u)-s(a))^+\\
&=F_0 (s(x)-s(a))^+ + \int_0^t F_{g_a^{(u)}} dN_u^{(a)}+\int_0^t F_{g_a^{(u)}} dL_u^a\\
&=F_0 (s(x)-s(a))^+ + \int_0^t F_{g_a^{(u)}} dN_u^{(a)}+\int_0^t F_{u} dL_u^a.
\end{align*}
Therefore, the martingale $(M_t(F_{g_a}), t\geq0)$ may be rewritten:
$$M_t(F_{g_a})=F_0 (s(x)-s(a))^+ +  \int_0^t F_{g_a^{(u)}} dN_u^{(a)} + \E_x\left[\int_0^{+\infty}F_{s}dL_s^a |\F_u \right].$$
\end{remark}

\section{An integral representation of $\Q_x^{(F)}$}

Finally, Point 2. of Theorem \ref{theo:penal} is a direct consequence of the following result:
\begin{theorem}\label{theo:IQ}
$\Q_x^{(F)}$ admits the following integral representation:
$$\Q_x^{(F)} =\frac{1}{\W_x(F_{g_a})} \left(\int_0^{+\infty} q(u,x,a) F_u \Pb^{x,u,a} \circ \Pb_a^{\uparrow a} + (s(x)-s(a)) F_0\Pb_x^{\uparrow a}\right)$$
\end{theorem}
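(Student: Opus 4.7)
The plan is to verify the claimed representation on each $\F_t$. Denote the right-hand side by $\tilde{\Q}_x$. From Point~1 of Theorem~\ref{theo:penal}, $\Q_x^{(F)}$ is characterized on $\F_t$ by having density $M_t(F_{g_a})/\W_x(F_{g_a})$ with respect to $\Pb_x$, so it suffices to show
$$\tilde{\Q}_x(\phi)\,\W_x(F_{g_a})=\E_x[M_t(F_{g_a})\phi]$$
for every bounded $\F_t$-measurable $\phi$. Taking $\phi\equiv 1$ and using the standard identity $\int_0^{+\infty}q(u,x,a)\Pb^{x,u,a}(F_u)\,du=\E_x[\int_0^{+\infty}F_u\,dL_u^a]$ gives, as a sanity check, $\tilde{\Q}_x(\Omega)=1$.

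To compute $\tilde{\Q}_x(\phi)$ in general, I would split the integral defining $\tilde{\Q}_x$ at $u=t$. For $u>t$ the concatenation $\omega_1\circ\omega_2$ restricted to $[0,t]$ depends only on $\omega_1$, so $(\Pb^{x,u,a}\circ\Pb_a^{\uparrow a})(\phi)=\Pb^{x,u,a}(\phi)$ on $\F_t$, and the same local-time identity turns the $u>t$ piece into $\E_x[\phi\int_t^{+\infty}F_u\,dL_u^a]$, matching the conditional-expectation term of $M_t(F_{g_a})$. For $u\le t$, I would disintegrate $\Pb_a^{\uparrow a}$ at time $t-u$ using $\Pb_a^{\uparrow a}(X_{t-u}\in dy)=n_{y,a}(t-u)(s(y)-s(a))\,m(dy)$ together with Proposition~\ref{prop:+-ga}(iii); the measure $q(u,x,a)n_{y,a}(t-u)\,m(dy)\,du$ that appears coincides by Proposition~\ref{prop:ga} with the joint density $\Pb_x(g_a^{(t)}\in du,X_t\in dy)$ on $\{0<u<t,\ y>a\}$, and the integrand is precisely the regular conditional expectation of $F_{g_a^{(t)}}\phi$ given $(g_a^{(t)},X_t)$. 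Hence this piece equals $\E_x[F_{g_a^{(t)}}(s(X_t)-s(a))\phi\,1_{\{g_a^{(t)}>0\}}]$. Finally, the atomic term $(s(x)-s(a))^+F_0\,\Pb_x^{\uparrow a}(\phi)$ unpacks, via the very definition of $\Pb_x^{\uparrow a}$ for $x>a$, into $\E_x[F_0(s(X_t)-s(a))\phi\,1_{\{T_a>t\}}]$, which is the remaining $g_a^{(t)}=0$ contribution (and is zero when $x\le a$). Summing the three pieces yields $\E_x[M_t(F_{g_a})\phi]$, as required.

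The main subtlety lies in the $u\le t$ decomposition, where the $\Pb_a^{\uparrow a}$-law restricted to $[0,t-u]$ has to be carefully identified with the correct mixture of $X^{\uparrow a}$-bridges in order to align with Proposition~\ref{prop:ga}; everything else is bookkeeping. Point~2 of Theorem~\ref{theo:penal} then follows at once from the structure of $\tilde{\Q}_x$: on the component $q(u,x,a)F_u\,\Pb^{x,u,a}\circ\Pb_a^{\uparrow a}$ one has $g_a=u<\infty$ and the pre- and post-$g_a$ parts are independent with laws $\Pb^{x,u,a}$ and $\Pb_a^{\uparrow a}$ respectively, while on the $\Pb_x^{\uparrow a}$ component $g_a=0$ and the post-$g_a$ law is again $\Pb_x^{\uparrow a}$. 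In either case the post-$g_a$ law is of $X^{\uparrow a}$-type, independent of $F$ and transient to $+\infty$ by construction.
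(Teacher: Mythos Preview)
Your proof is correct and uses the same core ingredients as the paper (Propositions~\ref{prop:ga} and~\ref{prop:+-ga}, the formula $\Pb_a^{\uparrow a}(X_{t-u}\in dy)=n_{y,a}(t-u)(s(y)-s(a))\,m(dy)$, and the definition of $\Pb_x^{\uparrow a}$), but the overall organization is genuinely different. The paper argues in the \emph{opposite direction}: it starts from $\Q_x^{(F)}$, evaluates it on test functionals of the separable form $G(X_s,s\le g_a^{(t)})\varphi(g_a^{(t)})H(X_{g_a^{(t)}+s},s\le t-g_a^{(t)})$ via the density $M_t(F_{g_a})$, and then lets $t\to+\infty$; the $\int_t^{+\infty}F_u\,dL_u^a$ piece vanishes in the limit, and the $(s(X_t)-s(a))^+$ piece converges to the claimed mixture after conditioning on $(g_a^{(t)},X_t)$. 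Your approach instead fixes $t$, takes a generic $\F_t$-measurable $\phi$, and shows directly that the candidate measure $\tilde\Q_x$ restricted to $\F_t$ has density $M_t(F_{g_a})/\W_x(F_{g_a})$ with respect to $\Pb_x$; no limit is needed, and equality on $\F_\infty$ follows from equality on each $\F_t$. Your route is arguably cleaner---it avoids the limiting argument and the restriction to separable test functions---while the paper's route has the advantage of \emph{discovering} the representation rather than merely verifying a given formula, and of making the pre-/post-$g_a$ independence (Point~2 of Theorem~\ref{theo:penal}) emerge from the computation itself rather than being read off afterwards.
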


\begin{proof}
Let $G,H$ and $\varphi$ be three Borel bounded functionals. We write:
\begin{align*}
&\W_x(F_{g_a}) \Q_x^{(F)}\left(G(X_s, s\leq g_a^{(t)})\varphi(g_a^{(t)})H(X_{g_a^{(t)}+s}, s\leq t-g_a^{(t)})\right)\\
&=\E_x\left[G(X_s, s\leq g_a^{(t)})\varphi(g_a^{(t)})H(X_{g_a^{(t)}+s}, s\leq t-g_a^{(t)})M_t(F_{g_a})\right]\\ 
&=\E_x\left[G(X_s, s\leq g_a^{(t)})\varphi(g_a^{(t)})H(X_{g_a^{(t)}+s}, s\leq t-g_a^{(t)})\left(
F_{g_a^{(t)}}(s(X_t)-s(a))^+ + \E_x\left[\int_t^{+\infty}F_{u}dL_u^a |\F_t \right] 
\right)\right]\\ 
&=I_1(t) + I_2(t).
\end{align*}
On the one hand, $I_2$ equals
$$I_2(t)=\E_x\left[G(X_s, s\leq g_a^{(t)})\varphi(g_a^{(t)})H(X_{g_a^{(t)}+s}, s\leq t-g_a^{(t)})\int_t^{+\infty}F_{u}dL_u^a \right]\xrightarrow[t\rightarrow+\infty]{} 0 $$
 from the dominated convergence theorem.\\
 On the other hand, from Propositions \ref{prop:ga} and \ref{prop:+-ga}:
 \begin{align*}
 I_1(t)&= \int_a^{+\infty}\int_0^t \Pb_x\left(g_a^{(t)}\in du, X_t \in dy\right)\;\times\\
& \hspace*{1cm}\E_x\left[G(X_s, s\leq u)\varphi(u)H(X_{u+s}, s\leq t-u) F_u (s(y)-s(a))| g_a^{(t)}=u, X_t=y\right] \\
&= \int_a^{+\infty}\int_0^t \Pb_x\left(g_a^{(t)}\in du, X_t \in dy\right)\times\\
& \hspace*{1cm}\Pb^{x,u,a}\left(G(X_s, s\leq u)  F_u\right)  \varphi(u)(s(y)-s(a))\E_x\left[H(X_{u+s}, s\leq t-u)  | g_a^{(t)}=u, X_t=y\right]. 
 \end{align*}
We now separate the two cases $g_a^{(t)}=0$ and $g_a^{(t)}>0$ as in relation (\ref{eq:gatX}). \\

\noindent
$\bullet$ First, when $g_a^{(t)}=0$ and $x\leq a$, this term is null. Indeed, for $x\leq a<y$, $\Pb_x\left(T_a>t, X_t \in dy\right)=0$ since $X$ has continuous paths. Next, for $x> a$:
\begin{align*}
&\int_a^{+\infty} \Pb_x\left(T_a>t, X_t \in dy\right)G(x)\E_x[F_0]  \varphi(0)(s(y)-s(a))\E_x\left[H(X_{s}, s\leq t)  | T_a>t, X_t=y\right]\\
&=G(x)\E_x[F_0]  \varphi(0)\E_x\left[(s(X_t)-s(a))^+H(X_{s}, s\leq t)  1_{\{T_a>t\}}\right]\\
&=G(x)\E_x[F_0]  \varphi(0)(s(x)-s(a))\E_x^{\uparrow a}\left[H(X_{s}, s\leq t) \right]\\
&\xrightarrow[t\rightarrow +\infty]{} G(x)\E_x[F_0]  \varphi(0)(s(x)-s(a))^+\E_x^{\uparrow a}\left[H(X_{s}, s\geq 0) \right].
\end{align*}

\noindent
$\bullet$ Second, when $g_a^{(t)}>0$:
\begin{align*}
& \int_a^{+\infty}\int_0^t \frac{q(u,x,a)}{s(y)-s(a)}\Pb_a^\uparrow(X_{t-u}\in dy) du\; \times\\
& \hspace*{1cm}\Pb^{x,u,a}\left(G(X_s, s\leq u)  F_u\right)  \varphi(u) (s(y)-s(a))\E_x\left[H(X_{u+s}, s\leq t-u) | g_a^{(t)}=u, X_t=y\right]\\
&= \int_a^{+\infty}\int_0^t q(u,x,a)\Pb_a^\uparrow(X_{t-u}\in dy) du\; \times\\
& \hspace*{1cm}\Pb^{x,u,a}\left(G(X_s, s\leq u)  F_u\right)  \varphi(u)\E_a^{\uparrow a}\left[H(X_{s}, s\leq t-u) | X_{t-u}=y\right]\\
&= \int_0^t du\;q(u,x,a) \Pb^{x,u,a}\left(G(X_s, s\leq u)  F_u\right)  \varphi(u)\E_a^{\uparrow a}\left[H(X_{s}, s\leq t-u)\right]\\ 
&\xrightarrow[t\rightarrow +\infty]{}\int_0^{+\infty } du\;q(u,x,a) \Pb^{x,u,a}\left(G(X_s, s\leq u)  F_u\right)  \varphi(u)\E_a^{\uparrow a}\left[H(X_{s}, s\geq 0)\right].
\end{align*}
\end{proof}

\begin{remark}
From Theorem \ref{theo:IQ}, $\Q_x^{(F)}(g_a<+\infty)=1$ and we deduce that, conditionally to $g_a$, 
\begin{enumerate}
\item on the event $g_a>0$, the law of the process $(X_{g_a+u}, u\geq0)$ under $\Q_x^{(F)}$ is the same as the law of $(X_u, u\geq0)$ under $\Pb_a^{\uparrow a}$,
\item on the event $g_a=0$, the law of the process $(X_{u}, u\geq0)$ under $\Q_x^{(F)}$ is the same as the law of $(X_u, u\geq0)$ under $\Pb_x^{\uparrow a}$.
\end{enumerate}
Observe that the process $(F_u, u\geq0)$ plays no role in these results. 
\end{remark}

\begin{example}
Let $h$ be a positive and decreasing function on $\R^+$. 
\begin{enumerate}[$\bullet$]
\item Let us take $(F_t, t\geq0)=(h(L_t^a), t\geq0)$ and assume that $\displaystyle\int_0^{+\infty}h(\ell)d\ell=1$:
$$ \Q_0^{(h(L_{g_a}^a))}=\int_0^{+\infty} du\,q(u,0,a) h(L_u^a) \Pb^{0,u,a}\circ \Pb_a^\uparrow.$$
Thus, under $\Q_0^{(h(L_{g_a}^a))}$, the r.v. $L_\infty^a$ is a.s. finite and admits $\ell\longmapsto h(\ell)$ as its density function. Furthermore,  
conditionally to $L_\infty^a=\ell$ the process $(X_t, t\leq g_a)$ has the same law as $(X_t, t\leq \tau^{(a)}_\ell)$ under $\Pb_0$.

\item Let us take $(F_t, t\geq0) = (h(t), t\geq0)$ and  assume that $\displaystyle\int_0^{+\infty}h(u)q(u,0,a)du=1$:
$$ \Q_0^{(h(g_a))}=\int_0^{+\infty} du\,q(u,0,a) h(u) \Pb^{0,u,a}\circ \Pb_a^\uparrow.$$
Then, under $\Pb_0^{(h(g_a))}$, the r.v. $g_a$ admits as density function $u\longmapsto h(u)q(u,0,a)$ and, conditionally to $g_a=u$ the process $(X_t, t\leq g_a)$ has the same law as $(X_t, t\leq u)$ under $\Pb^{0,u,a}$.
\end{enumerate}
\end{example}

\section{Appendix}
Let $a\geq 0$ and define  $(N_t^{(a)}:= (s(X_t)-s(a))^+-L_t^a, t\geq0)$. The aim of this section is to prove that $(N_t^{(a)}, t\geq0)$ is a martingale in the filtration $(\F_t, t\geq0)$.
Applying the Markov property to the diffusion $(X_t, t\geq0)$ we deduce that:
$$\E_0\left[N_{t+s}^{(a)}|\F_s \right]=\E_{X_s}\left[(s(X_t)-s(a))^+\right]-L_s^a -\E_{X_s}\left[L_t^a\right].$$
We set $x=X_s$, so we need to prove that for every $x\geq0$:
\begin{equation*}
(s(x)-s(a))^+=\E_x\left[(s(X_t)-s(a))^+\right]-\E_x\left[L_t^a\right],
\end{equation*}
or rather:
$$\int_0^{+\infty} (s(y)-s(a))^+ q(t,x,y)m(dy)= \int_0^t q(u,x,a)du + (s(x)-s(a))^+.$$
Let us take the Laplace transform of this last relation (applying Fubini-Tonelli):
\begin{equation}\label{eq:LNa}
\int_0^{+\infty} (s(y)-s(a))^+ u_\lambda(x,y)m(dy)= \frac{u_\lambda(x,a)}{\lambda} + \frac{(s(x)-s(a))^+ }{\lambda}.
\end{equation}
Our aim now is to prove (\ref{eq:LNa}). To this end, 
we shall use the following representation of the resolvent kernel $u_\lambda(x,y)$ (see \cite[p.19]{BS}):
$$u_\lambda(x,y)=\omega_\lambda^{-1} \psi_\lambda(x)\varphi_\lambda(y)\qquad \qquad x\leq y$$
where $\psi_\lambda$ and $\varphi_\lambda$ are the fundamental solutions of the generalized differential equation
\begin{equation}\label{eq:Gen}
\frac{d^2 }{d m\; d s} u =\lambda u
\end{equation}
such that $\psi_\lambda$ is increasing (resp.  $\varphi_\lambda$ is decreasing) and the Wronskian $\omega_\lambda$ is given, for all $z\geq0$ by:
$$\omega_\lambda = \varphi_\lambda(z) \frac{ d\psi_\lambda}{ds}(z)- \psi_\lambda(z) \frac{ d\varphi_\lambda}{ds}(z).$$
Note that since $m$ has no atoms, the meaning of (\ref{eq:Gen}) is as follows:
$$\forall y\geq x, \quad \lambda \int_x^y u(z) m(dz) = \frac{d\,u}{ds}(y)-\frac{d\,u}{ds}(x) \qquad \text{where} \quad \frac{d\,u}{ds}(x):=\lim_{h\rightarrow0}\frac{u(x+h)-u(x)}{s(x+h)-s(x)}.$$

\noindent
$\bullet$ Assume first that $x\leq a$.
\begin{align*}
&\int_a^{+\infty} (s(y)-s(a)) u_\lambda(x,y)m(dy)\\
&=\frac{1}{\omega_\lambda}  \int_a^{+\infty} \left( \int_a^y ds(z) \right)  \psi_\lambda(x)\varphi_\lambda(y)m(dy)\\
&= \frac{ \psi_\lambda(x)}{\omega_\lambda} \int_a^{+\infty}  ds(z)  \int_z^{+\infty}\varphi_\lambda(y) m(dy)  \quad \text{(applying Fubini-Tonelli's theorem since $\varphi_\lambda\geq0$)}\\
&=-\frac{\psi_\lambda(x)}{\lambda\omega_\lambda} \int_a^{+\infty} ds(z)     \frac{d \varphi_\lambda}{ds}(z) \quad \left(\text{since }\lim_{y\rightarrow+\infty} \frac{d\varphi_\lambda}{ds}(y)=0\quad \text{as $+\infty$ is a natural boundary}\right) \\
&= \frac{\psi_\lambda(x)}{\lambda\omega_\lambda}\; \varphi_\lambda(a)\quad \left(\text{since }\lim_{z\rightarrow+\infty} \varphi_\lambda(z)=0\quad \text{as $+\infty$ is a natural boundary}\right)\\ 
&=\frac{u_\lambda(x,a)}{\lambda}
\end{align*}
which gives (\ref{eq:LNa}) for $x\leq a$.\\

\noindent
$\bullet$ Now, let us suppose that $x>a$. We have, with the same computation:
\begin{align*}
&\int_a^{+\infty} (s(y)-s(a)) u_\lambda(x,y)m(dy)\\
&=\int_a^{x} (s(y)-s(a)) u_\lambda(x,y)m(dy)+\int_x^{+\infty} (s(y)-s(a)) u_\lambda(x,y)m(dy)\\
&=I_1+I_2.
\end{align*}
On the one hand:
\begin{align*}
I_1&=\frac{\varphi_\lambda(x)}{\omega_\lambda}\int_a^{x} ds(z) \int_z^{x} \psi_\lambda(y)m(dy)\\
&= \frac{\varphi_\lambda(x)}{\lambda \omega_\lambda} \int_a^{x} ds(z) \left(\frac{d \psi_\lambda}{ds}(x)-\frac{d \psi_\lambda}{ds}(z)\right)\\
&= \frac{ \varphi_\lambda(x)}{\lambda\omega_\lambda}\left((s(x)-s(a)) \frac{d \psi_\lambda}{ds}(x) -  \left(\psi_\lambda(x)-\psi_\lambda(a)\right)\right)\\
&= \frac{s(x)-s(a)}{\lambda \omega_\lambda}\varphi_\lambda(x)\frac{d \psi_\lambda}{ds}(x) 
- \frac{u_\lambda(x,x)}{\lambda} + \frac{u_\lambda(x,a)}{\lambda}.
\end{align*}
On the other hand:
\begin{align*}
I_2&=\int_x^{+\infty} (s(y)-s(x)) u_\lambda(x,y)m(dy)+ (s(x)-s(a)) \int_x^{+\infty}u_\lambda(x,y)m(dy)\\
&= \frac{u_\lambda(x,x)}{\lambda} +\frac{s(x)-s(a)}{\omega_\lambda} \psi_\lambda(x) \int_x^{+\infty}\varphi_\lambda(y)m(dy)\quad \text{(from the previous computations)}\\
&=\frac{u_\lambda(x,x)}{\lambda} - \frac{s(x)-s(a)}{\lambda\omega_\lambda} \psi_\lambda(x) \frac{d \varphi_\lambda}{ds}(x). 
\end{align*}
Finally, gathering both terms, we obtain for $x>a$:
\begin{align*}
\int_a^{+\infty} (s(y)-s(a)) u_\lambda(x,y)m(dy)&= \frac{s(x)-s(a)}{\lambda\omega_\lambda}\left(\varphi_\lambda(x)\frac{d \psi_\lambda}{ds}(x)-\psi_\lambda(x)\frac{d \varphi_\lambda}{ds}(x)  \right)+ \frac{u_\lambda(x,a)}{\lambda},\\
&= \frac{s(x)-s(a)}{\lambda}+ \frac{u_\lambda(x,a)}{\lambda},
\end{align*}
which is the desired result (\ref{eq:LNa}) from the definition of the Wronskian.\\
\qed

\bibliographystyle{alpha}
%\bibliography{Biblio}
\nocite{*}

\end{document}